\newtheorem{theorem}{Theorem}[section]
\newtheorem{lemma}[theorem]{Lemma}
\newtheorem{rem}[theorem]{Remark}
\newtheorem{prop}[theorem]{Proposition}
\newtheorem{cor}[theorem]{Corollary}
\DeclareMathOperator{\im}{im}
\DeclareMathOperator{\sfl}{sf}
\DeclareMathOperator{\diag}{diag}
\DeclareMathOperator{\Sp}{Sp}
\DeclareMathOperator{\gra}{graph}
\title{The Maslov Index and the Spectral Flow - revisited}
\author{Marek Izydorek, Joanna Janczewska and Nils Waterstraat}
\begin{document}
\date{}
\maketitle

\footnotetext[1]{{\bf 2010 Mathematics Subject Classification: Primary 53D12; Secondary 58J30, 37J05, 58E10 }}
\footnotetext[2]{This work was supported by the grant BEETHOVEN2 of the National Science Centre, Poland, no. 2016/23/G/ST1/04081.}

\begin{abstract}
\noindent
We give an elementary proof of a celebrated theorem of Cappell, Lee and Miller which relates the Maslov index of a pair of paths of Lagrangian subspaces to the spectral flow of an associated path of selfadjoint first-order operators. We particularly pay attention to the continuity of the latter path of operators, where we consider the gap-metric on the set of all closed operators on a Hilbert space. Finally, we obtain from Cappell, Lee and Miller's theorem a spectral flow formula for linear Hamiltonian systems which generalises a recent result of Hu and Portaluri.
\end{abstract}

\section{Introduction}
Let $\langle\cdot,\cdot\rangle$ be the Euclidean scalar product on $\mathbb{R}^{2n}$ and $\omega_0(\cdot,\cdot)=\langle J\cdot,\cdot\rangle$ the standard symplectic form, where 

\begin{align}\label{J}
J=\begin{pmatrix}
0&-I_n\\
I_n&0
\end{pmatrix}
\end{align}
and $I_n$ denotes the identity matrix. Let us recall that an $n$-dimensional subspace $L\subset\mathbb{R}^{2n}$ is called Lagrangian if the restriction of $\omega_0$ to $L\times L$ vanishes. The set $\Lambda(n)$ of all Lagrangian subspaces in $\mathbb{R}^{2n}$ is called the \textit{Lagrangian Grassmannian}. It can be regarded as a submanifold of the Grassmannian $G_n(\mathbb{R}^{2n})$ and so it has a canonical topology. In what follows, we denote by $I$ the unit interval $[0,1]$. The Maslov index $\mu_{Mas}(\gamma_1,\gamma_2)$ assigns to any pair of paths $\gamma_1, \gamma_2:I\rightarrow\Lambda(n)$ an integer which, roughly speaking, is the total number of non-trivial intersections of the Lagrangian spaces $\gamma_1(\lambda)$ and $\gamma_2(\lambda)$ whilst the parameter $\lambda$ travels along the interval $I$. There are several different approaches to the Maslov index and here we just want to mention \cite{Maslov}, \cite{Bott}, \cite{Duistermaat}, \cite{Sternberg}, \cite{RobbinMaslov} and \cite{Zehnder}, which is far from being exhaustive. Cappell, Lee and Miller introduced in \cite{Cappell} four different ways to define the Maslov index and showed that they are all equivalent. They first construct the Maslov index geometrically by using a stratification of $\Lambda(n)$ and intersection theory from differential topology following \cite{Sternberg}. Their approach also yields a uniqueness theorem for the Maslov index characterising this invariant uniquely by six axioms. The uniqueness theorem is then used to show that the Maslov index can alternatively be defined by determinant line bundles, $\eta$-invariants and the spectral flow, respectively.\\
In this paper we focus on the latter invariant and aim to give a more elementary proof of the equality of the Maslov index and the spectral flow of a path of operators as introduced by Cappell, Lee and Miller in \cite{Cappell}. Let us first recall that the spectral flow is a homotopy invariant for paths of selfadjoint Fredholm operators that was invented by Atiyah, Patodi and Singer in \cite{APS}, and since then has been used in various different settings (see e.g. \cite[\S 5.2]{Fredholm}). The spectrum of a selfadjoint Fredholm operator consists only of eigenvalues of finite multiplicity in a neighbourhood of $0\in\mathbb{R}$ and, roughly speaking, the spectral flow of a path of such operators is the net number of eigenvalues crossing $0$ whilst the parameter of the path travels along the interval.\\
Let us now consider for a pair of paths $(\gamma_1,\gamma_2)$ in $\Lambda(n)$ the differential operators

\begin{align}\label{ops-def}
\mathcal{A}_\lambda:\mathcal{D}(\mathcal{A}_\lambda)\subset L^2(I,\mathbb{R}^{2n})\rightarrow L^2(I,\mathbb{R}^{2n}),\quad (\mathcal{A}_\lambda u)(t)=Ju'(t),
\end{align}
where

\begin{align}\label{ops-def-domain}
\mathcal{D}(\mathcal{A}_\lambda)=\{u\in H^1(I,\mathbb{R}^{2n}):\, u(0)\in\gamma_1(\lambda), u(1)\in\gamma_2(\lambda)\}.
\end{align}
By an elementary computation, $\mathcal{A}_\lambda$ is symmetric, and it is also not difficult to see that it actually is a selfadjoint Fredholm operator. Note that the kernel of $\mathcal{A}_\lambda$ is isomorphic to $\gamma_1(\lambda)\cap\gamma_2(\lambda)$, which suggests that the spectral flow of the path $\mathcal{A}=\{\mathcal{A}_\lambda\}_{\lambda\in I}$ is related to the Maslov index of the pair $(\gamma_1,\gamma_2)$. As we already mentioned above, their equality is one of the main achievements of \cite{Cappell}. However, before we formulate this as a theorem, we want to highlight a further issue related to this problem.\\
Above, we have spoken about paths of differential operators and so tacitly assumed continuity. Note that the family \eqref{ops-def} has the non-constant domains \eqref{ops-def-domain} and so continuity is a non-trivial problem. There are different metrics on spaces of unbounded selfadjoint Fredholm operators on a Hilbert space $H$ and we recommend \cite{Lesch} for an exhaustive discussion (see also \cite{Wahl}). A classical approach is to transform unbounded selfadjoint operators $T$ by functional calculus to the bounded selfadjoint operators 

\begin{align}\label{Riesz}
(I_H+T^2)^{-\frac{1}{2}}\in\mathcal{L}(H),
\end{align}
and to use the operator norm on $\mathcal{L}(H)$ for introducing a distance between unbounded operators. Actually, Atiyah, Patodi and Singer defined the spectral flow in \cite{APS} for bounded selfadjoint Fredholm operators and applied it to paths of differential operators by using \eqref{Riesz}. However, checking continuity along these lines is tedious, if possible at all (see e.g. \cite{Nicolaescu}), and it seems that the continuity of families of unbounded operators has sometimes been ignored in the literature.\\
Every (generally unbounded) selfadjoint operator on a Hilbert space is closed, and there is a canonical metric on the set of all closed operators which is called the gap-metric (see \S IV.2 in Kato's monograph \cite{Kato}). It was shown in \cite{Nicolaescu} (see also \cite[Prop. 2.2]{Lesch}) that every path of selfadjoint Fredholm operators that is mapped to a continuous path of bounded operators under \eqref{Riesz} is also continuous with respect to the gap-metric. Finally, Booss-Bavnbek, Lesch and Phillips constructed in \cite{UnbSpecFlow} the spectral flow for paths of selfadjoint Fredholm operators in this more general setting. The main result of this paper now reads as follows (see \cite[Thm. 0.4]{Cappell}).

\begin{theorem}\label{main}
If $(\gamma_1,\gamma_2)$ is a pair of paths in $\Lambda(n)$, then the family of differential operators \eqref{ops-def} is continuous with respect to the gap-metric and 

\[\sfl(\mathcal{A})=\mu_{Mas}(\gamma_1,\gamma_2).\]
\end{theorem}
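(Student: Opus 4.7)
The statement has two independent parts, which I would handle in order.

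\emph{Gap continuity.} The obstacle is the $\lambda$-dependence of the domain \eqref{ops-def-domain}, which I would remove by conjugation. Using that $\Lambda(n)$ is a homogeneous space under $O(2n)$ (via the inclusion $U(n)\hookrightarrow O(2n)$), the paths $\gamma_1,\gamma_2$ can be lifted to continuous paths $M_1,M_2:I\to O(2n)$ with $M_i(0)=I_{2n}$ and $M_i(\lambda)\gamma_i(0)=\gamma_i(\lambda)$. I choose a continuous interpolation $R:I\times I\to O(2n)$ with $R(\lambda,0)=M_1(\lambda)$ and $R(\lambda,1)=M_2(\lambda)$, and set $(U_\lambda u)(t):=R(\lambda,t)u(t)$. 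Then $U_\lambda$ is a norm-continuous family of unitaries on $L^2(I,\mathbb{R}^{2n})$ with $U_\lambda\mathcal{D}(\mathcal{A}_0)=\mathcal{D}(\mathcal{A}_\lambda)$, and a short calculation shows that $\tilde{\mathcal{A}}_\lambda:=U_\lambda^{-1}\mathcal{A}_\lambda U_\lambda$, acting on the fixed domain $\mathcal{D}(\mathcal{A}_0)$, equals $\mathcal{A}_0$ plus a bounded multiplication operator $B(\lambda,t)$ depending norm-continuously on $\lambda$. Such a family is norm-resolvent continuous, hence gap continuous (cf.\ \cite{Nicolaescu}); since conjugation by a norm-continuous family of unitaries preserves gap continuity, $\lambda\mapsto\mathcal{A}_\lambda$ is gap continuous as well.

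\emph{Equality of the invariants.} I would follow the crossing-form route rather than the axiomatic one of \cite{Cappell}. Both $\sfl(\mathcal{A})$ and $\mu_{Mas}(\gamma_1,\gamma_2)$ are homotopy invariant relative to endpoints with $\gamma_1\cap\gamma_2=0$ and additive under concatenation, so by a standard transversality argument one may assume after a small perturbation that $(\gamma_1,\gamma_2)$ has only finitely many regular crossings. At such a crossing $\lambda_0$ the Maslov contribution is the signature of the crossing form $\Gamma_{Mas}(v)=\omega_0(\dot\gamma_1(\lambda_0)v,v)-\omega_0(\dot\gamma_2(\lambda_0)v,v)$ on $\gamma_1(\lambda_0)\cap\gamma_2(\lambda_0)$, whereas the spectral-flow contribution is the signature of $\Gamma_{\sfl}(u_0)=\tfrac{d}{d\lambda}\big|_{\lambda_0}\langle\tilde{\mathcal{A}}_\lambda u_0,u_0\rangle$ on $\ker\tilde{\mathcal{A}}_{\lambda_0}$. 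Since $\ker\mathcal{A}_{\lambda_0}$ consists of constant paths $u_0(t)\equiv v\in\gamma_1(\lambda_0)\cap\gamma_2(\lambda_0)$, pulling back by $U_{\lambda_0}$ identifies the two kernels canonically, and an explicit differentiation together with integration by parts should reduce $\Gamma_{\sfl}$ to $\Gamma_{Mas}$.

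\emph{Main obstacle.} The delicate step is this crossing-form identification, because it depends on the non-canonical choice of $R(\lambda,t)$. To avoid spurious boundary terms I would insist that $R(\lambda,t)$ take values in $\Sp(2n,\mathbb{R})$---this is possible since $U(n)=\Sp(2n,\mathbb{R})\cap O(2n)$---so that $R^{T}JR=J$ and the principal symbol of $\tilde{\mathcal{A}}_\lambda$ remains $J\partial_t$. With this choice the cross-terms produced by $\dot R$ should collapse onto $\Gamma_{Mas}$, the signatures at every regular crossing agree, and the theorem follows by summing local contributions.
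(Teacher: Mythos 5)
Your plan is a genuinely different route from the paper's, so let me first record the contrast and then point out where the plan has real gaps.

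\emph{Comparison with the paper.} For gap continuity, the paper does \emph{not} conjugate away the varying domain: it estimates $\delta(\gra(\mathcal{A}_\lambda),\gra(\mathcal{A}_{\lambda_0}))$ directly by constructing an explicit (non-orthogonal) projection of $H^1$ onto $\mathcal{D}(\mathcal{A}_{\lambda_0})$ from the projections $\hat{P}_\lambda,\tilde{P}_\lambda$ onto $\gamma_1(\lambda),\gamma_2(\lambda)$, and then invokes Kato's $\|(I-P)Q\|=\|P-Q\|$ lemma. Your reduction to a fixed domain via $U_\lambda$ is a cleaner conceptual picture but requires constructing the interpolating family $R:I\times I\to U(n)$ (continuity in $\lambda$, $C^1$ in $t$, symplectic--orthogonal), which you assert but do not build. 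For the index identity, the paper does not touch crossing forms: it proves a uniqueness theorem (Proposition \ref{prop-Maslov}) that characterises $\mu_{Mas}$ on admissible pairs by homotopy invariance, additivity, and normalisation on $(\gamma_{nor},L_1)$ and $(L_0,\gamma'_{nor})$, and then only has to compute the two spectra of \eqref{ops-def} for those model paths. Your crossing-form route is essentially the Robbin--Salamon argument of \cite{Robbin} transported through the conjugation; it is more computational, and it inverts the logical direction of the paper, which \emph{derives} the fixed-boundary-with-potential statement (Theorem \ref{thm-Hamiltonian}) from Theorem \ref{main}, not the other way round.

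\emph{Genuine gaps.} Two points in your plan are not mere bookkeeping. First, the non-admissible case is missing. You explicitly restrict the homotopy-invariance statement to pairs ``with $\gamma_1\cap\gamma_2=0$'' at the endpoints, but Theorem \ref{main} makes no such assumption. The paper's $\mu_{Mas}$ for a non-admissible pair is defined by the specific $e^{-\Theta J}$ regularisation, while the spectral flow in \eqref{sfl-def} uses $\chi_{[0,\varepsilon]}$ and hence treats zero eigenvalues at the two endpoints asymmetrically; matching these two conventions is exactly the content of Step 2 in the paper (Lemma \ref{lemma-sflperturbation} plus the conjugation by $\exp(\delta_0 Jt)$), and a perturbation ``to finitely many regular crossings'' alone does not settle it because perturbing non-admissible endpoints can change either invariant by a sign-dependent boundary contribution. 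Second, the claim that the two crossing forms agree is stated as ``should reduce,'' not proved; when one actually carries out the differentiation one gets
\[
\frac{d}{d\lambda}\Big|_{\lambda_0}\langle\tilde{\mathcal{A}}_\lambda u_0,u_0\rangle=\omega_0(\xi(1),v)-\omega_0(\xi(0),v),\qquad \xi(j)=\partial_\lambda R(\lambda_0,j)\,R(\lambda_0,j)^{-1}v,
\]
and identifying these boundary terms with the Maslov crossing forms of $\gamma_1,\gamma_2$ involves a genuine sign bookkeeping (the diagonal sits inside $(\mathbb{R}^{2n}\times\mathbb{R}^{2n},(-\omega_0)\times\omega_0)$, and $\omega_0(\dot\phi_{\lambda_0}v,v)=-\omega_0(v,\dot\phi_{\lambda_0}v)$). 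Without pinning this down against the paper's normalisation (N) you cannot exclude an overall sign error, which is exactly what the paper's computation of the two model spectra is designed to prevent. Finally, you would still have to justify the passage from merely continuous pairs to $C^1$ pairs with only regular crossings, which your plan treats as ``standard transversality'' without noting that the homotopy has to preserve the endpoint kernels.

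In short: the route is viable and recognisable (it is close to \cite{Robbin}), but as written it omits the non-admissible case entirely and leaves the crossing-form identification and its sign unverified, whereas the paper avoids both difficulties via the axiomatic characterisation and the explicit $\mathcal{A}^\delta$/conjugation argument.
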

\noindent
Let us make a few comments on our proof. Firstly, we want to emphasise that we prove the gap-continuity of the family \eqref{ops-def} from first principles just by elementary estimates and standard facts about orthogonal projections that can all be found in the monograph \cite{Kato}. Secondly, our proof of the spectral flow formula in Theorem \ref{main} is surprisingly simple. We assume at first that 

\begin{align}\label{introduction-transversal}
\gamma_1(0)\cap\gamma_2(0)=\gamma_1(1)\cap\gamma_2(1)=\{0\}
\end{align}
and show that the Maslov index can be characterised in this case by three axioms. This uniqueness theorem needs nothing else than the elementary properties of the Maslov index and the fact that the fundamental group of $\Lambda(n)$ is infinitely cyclic, which was known already from Arnold's classical paper \cite{Maslov}. Two of our axioms are trivially satisfied for the spectral flow of \eqref{ops-def}, and the remaining one only requires the computation of the spectra of two simple examples of differential operators as in \eqref{ops-def}. The general case when \eqref{introduction-transversal} is not assumed, can easily be obtained from the previous case by a simple conjugation by a path of invertible operators. After a brief recapitulation of the Maslov index in Section \ref{section-Maslov}, and the gap-metric and spectral flow in Section \ref{section-spectralflow}, we explain all this in detail in Section \ref{section-proof} where we prove Theorem \ref{main}. Throughout the paper, we aim our presentation to be rather self-contained, and we will just use some well-known facts from Kato \cite{Kato}.\\
Finally, we review a recent spectral flow formula for linear Hamiltonian systems by Hu and Portaluri from \cite{Hu}, which they call \textit{a new index theory on bounded domains}. Firstly, we note that the considered families of Hamiltonian systems are continuous with respect to the gap-metric, which follows easily from our approach to Cappell, Lee and Miller's Theorem. Secondly, we obtain a spectral flow formula in this setting by a conjugation from Cappell, Lee and Miller, and we explain that our result actually is a generalisation of Hu and Portaluri's Theorem.


\section{Maslov Index and Spectral Flow - a brief recap}

\subsection{The Maslov Index}\label{section-Maslov}
The aim of this section is to briefly recall the definition of the Maslov index, where we follow \cite{PiccioneBook}.\\
Let $\Sp(2n,\mathbb{R})$ denote the group of symplectic matrices on $\mathbb{R}^{2n}$, i.e., those $A\in M(2n,\mathbb{R})$ satisfying $A^TJA=J$ or, alternatively, which preserve $\omega_0$. If we identify $\mathbb{R}^{2n}$ with $\mathbb{C}^n$ by $(x_1,\ldots,x_{2n})\mapsto (x_1,\ldots,x_n)+i(x_{n+1},\ldots, x_{2n})$ then the standard hermitian scalar product on $\mathbb{C}^n$ is 

\[\langle x,y\rangle_{\mathbb{C}}=\langle x,y\rangle-i\omega_0(x,y).\] 
Hence each unitary matrix $U\in U(n)$ preserves $\omega_0$ and so we can regard $U(n)$ as a subset of $\Sp(2n,\mathbb{R})$. Also, the orthogonal matrices $O(n)$ can be seen as a subgroup of $U(n)$ by complexification. Then $O(n)$ consists exactly of those $A\in U(n)$ which leave $\mathbb{R}^n\times\{0\}$ invariant.\\
Obviously, $AL\in\Lambda(n)$ if $L\in\Lambda(n)$ and $A\in\Sp(2n,\mathbb{R})$, and it can be shown that the restriction of this action to $U(n)\times \Lambda(n)\rightarrow\Lambda(n)$ is transitive. As the stabiliser subgroup of $\mathbb{R}^n\times\{0\}\in\Lambda(n)$ is $O(n)$, we see that there is a diffeomorphism

\begin{align}\label{diffeoU}
U(n)/O(n)\simeq\Lambda(n),\quad A\mapsto A(\mathbb{R}^n\times\{0\}).
\end{align}
Let us now consider the map $d:U(n)\rightarrow S^1$, $d(A)=\det^2(A)$, which descends to the quotient by

\[\overline{d}:U(n)/O(n)\rightarrow S^1,\quad A\cdot O(n)\mapsto \det{^2}(A).\]
Note that
\[\ker(d)/O(n)\hookrightarrow U(n)/O(n)\xrightarrow{\overline{d}} S^1\]
is a fibre bundle, and it is not difficult to see that $\ker(d)/O(n)\simeq SU(n)/SO(n)$, where the latter space is simply connected. It follows from the long exact sequence of a fibre bundle that the induced map 

\[\overline{d}_\ast:\pi_1(U(n)/O(n))\rightarrow\pi_1(S^1)\cong\mathbb{Z}\]
is an isomorphism. Consequently, we obtain from \eqref{diffeoU} an isomorphism

\[\mu_{Mas}:\pi_1(\Lambda(n))\rightarrow\mathbb{Z},\]
which is the \textit{Maslov index} for closed paths in $\Lambda(n)$. Roughly speaking, given an arbitrary $L_0\in\Lambda(n)$, the Maslov index counts the total number of intersections of a loop in $\Lambda(n)$ with $L_0$. This is independent of the particular choice of $L_0$, which however is no longer the case if we extend the definition to non closed paths in $\Lambda(n)$ as follows.\\
We fix $L_0\in\Lambda(n)$ and note at first that $L_0$ yields a stratification 

\[\Lambda(n)=\bigcup^n_{k=0}\Lambda_k(L_0),\]
where

\[\Lambda_k(L_0)=\{L\in\Lambda(n):\, \dim(L\cap L_0)=k\}.\]    
From the fact that $\Lambda_0(L_0)$ is contractible (see e.g. \cite[Rem. 2.5.3]{PiccioneBook}) and the long exact sequence of homology, we see that the inclusion induces an isomorphism

\[H_1(\Lambda(n))\rightarrow H_1(\Lambda(n),\Lambda_0(L_0)).\]
Also, as $\pi_1(\Lambda(n))$ is abelian, $H_1(\Lambda(n))$ is isomorphic to $\pi_1(\Lambda(n))$ and so we obtain a sequence of isomorphisms

\begin{align}\label{Maslovsequence}
H_1(\Lambda(n),\Lambda_0(L_0))\rightarrow H_1(\Lambda(n))\rightarrow \pi_1(\Lambda(n))\rightarrow\pi_1(U(n)/O(n))\rightarrow\mathbb{Z}.
\end{align}
Finally, every path in $\Lambda(n)$ having endpoints in $\Lambda_0(L_0)$ canonically yields an element in\linebreak $H_1(\Lambda(n),\Lambda_0(L_0))$. The \textit{Maslov index} of the path is the integer obtained from the sequence of isomorphisms \eqref{Maslovsequence}.\\
Let us note from the very definition the following three properties of the Maslov index:

\begin{itemize}
\item[(i)] If $\gamma_1,\gamma_2$ are homotopic by a homotopy having endpoints in $\Lambda_0(L_0)$, then

\[\mu_{Mas}(\gamma_1,L_0)=\mu_{Mas}(\gamma_2,L_0).\]
\item[(ii)] If $\gamma_1, \gamma_2$ are such that $\gamma_1(1)=\gamma_2(0)$, then 

\[\mu_{Mas}(\gamma_1\ast\gamma_2,L_0)=\mu_{Mas}(\gamma_1,L_0)+\mu_{Mas}(\gamma_2,L_0).\]
\item[(iii)] If $\gamma(\lambda)\in\Lambda_0(L_0)$ for all $\lambda\in I$, then $\mu_{Mas}(\gamma,L_0)=0$.
\end{itemize}
Let us point out that (iii) also follows from (i) and (ii) independently of the construction.\\
The Maslov index can easily be generalised to a pair of paths in $\Lambda(n)$. To this aim let us call a pair of paths $(\gamma_1,\gamma_2)$ \textit{admissible} if

\[\gamma_1(0)\cap\gamma_2(0)=\gamma_1(1)\cap\gamma_2(1)=\{0\}.\]
In what follows we consider $\mathbb{R}^{2n}\times\mathbb{R}^{2n}$ as a symplectic space with respect to the symplectic form $(-\omega_0)\times\omega_0$. Note that the diagonal $\Delta$ is in $\Lambda(2n)$, as well as $L_1\times L_2$ for any $L_1,L_2\in\Lambda(n)$. Moreover, $L_1\cap L_2\neq\{0\}$ if and only if $(L_1\times L_2)\cap\Delta\neq\{0\}$. Hence it is natural to define the \textit{Maslov index} for a pair $(\gamma_1,\gamma_2)$ of admissible paths in $\Lambda(n)$ as

\[\mu_{Mas}(\gamma_1,\gamma_2)=\mu_{Mas}(\gamma_1\times\gamma_2,\Delta).\]
Note that the basic properties which we previously mentioned carry over immediately, i.e.,

\begin{itemize}
\item[(i')] $\mu_{Mas}(\gamma_1,\gamma_2)=0$ if $\gamma_1(\lambda)\cap\gamma_2(\lambda)=\{0\}$ for all $\lambda\in I$.
\item[(ii')] $\mu_{Mas}(\gamma_1\ast\gamma_3,\gamma_2\ast\gamma_4)=\mu_{Mas}(\gamma_1,\gamma_2)+\mu_{Mas}(\gamma_3,\gamma_4)$ if $\gamma_{1}(1)=\gamma_{3}(0)$ and $\gamma_{2}(1)=\gamma_{4}(0)$.
\item[(iii')] $\mu_{Mas}(\gamma_1,\gamma_2)=\mu_{Mas}(\gamma_3,\gamma_4)$ if $\gamma_1\simeq\gamma_3$ and $\gamma_2\simeq\gamma_4$ are homotopic by a homotopy through admissible pairs.
\end{itemize}
Also, it is not difficult to see from the construction of the Maslov index that

\begin{itemize}
\item[(iv')] $\mu_{Mas}(\gamma_1,\gamma_2)=\mu_{Mas}(\gamma_1,L_0)$ in case that $\gamma_2(\lambda)=L_0$ for some $L_0\in\Lambda(n)$ and all $\lambda\in I$,
\item[(v')] $\mu_{Mas}(\gamma_1,\gamma_2)=-\mu_{Mas}(\gamma_2,\gamma_1)$ for any admissible pair $(\gamma_1,\gamma_2)$. 
\end{itemize}
Finally, let us define the Maslov index for a non-admissible pair of paths. It is important to note that in this case there are different definitions in the literature. Here we follow \cite{Cappell}, and note that given $L_1, L_2\in\Lambda(n)$ such that $L_1\cap L_2\neq\{0\}$, there is $\varepsilon>0$ such that $e^{\Theta J}L_2\in\Lambda(n)$ and $L_1\cap e^{\Theta J}L_2=\{0\}$ for all $0<|\Theta|\leq\varepsilon$. We define the Maslov index as

\[\mu_{Mas}(\gamma_1,\gamma_2)=\mu_{Mas}(\gamma_1,e^{-\Theta J}\gamma_2),\]
where $\Theta$ is such that $\gamma_1(0)\cap e^{-\Theta' J}\gamma_2(0)=\gamma_1(1)\cap e^{-\Theta' J}\gamma_2(1)=\{0\}$ for all $0<|\Theta'|\leq\Theta$. By the homotopy invariance, it is clear that this definition does not depend on the choice of $\Theta$. Also, it coincides with the previous definition in case that the pair of paths is admissible.


\subsubsection{The Paths $\gamma_{nor}$ and $\gamma'_{nor}$}\label{section-gammanor}
The aim of this section is to compute the Maslov index for two elementary paths that will also become important in our proof of Theorem \ref{main} below. The examples also show that \eqref{Maslovsequence} is very convenient to obtain paths in $\Lambda(n)$ with a given Maslov index.\\
Let us first consider the path

\[[0,1]\ni \lambda\mapsto A(\lambda)=\diag(e^{i\pi\lambda},1,\ldots,1)\in U(n)\]  
and its projection $\overline{A}(\lambda):=A(\lambda)\cdot O(n)$ to the quotient $U(n)/O(n)$. Note that\linebreak $A(0)\diag(-1,1,\ldots,1)=A(1)$ and so $\overline{A}$ is a closed curve. Also, as $\det^2(A(\lambda))=e^{2\pi i\lambda}$, we see that the Maslov index of the corresponding path in $\Lambda(n)$ is $1$. Using the identification $\mathbb{C}^n\cong\mathbb{R}^{2n}$, it is readily seen that

\[\gamma_{nor}(\lambda):=A(\lambda)(\mathbb{R}^n\times\{0\})=\mathbb{R}(\cos(\pi\lambda)e_1+\sin(\pi\lambda)e_{n+1})+\sum^{n}_{j=2}{\mathbb{R}e_j}\in\Lambda(n).\] 
Hence we have found a path $\gamma_{nor}$ such that $\gamma_{nor}(0)=\gamma_{nor}(1)=\mathbb{R}^n\times\{0\}$ and $\mu_{Mas}(\gamma_{nor})=1$.\\
Let us now consider

\[[0,1]\ni \lambda\mapsto B(\lambda)=\diag(-ie^{ i\pi\lambda},i,\ldots,i)\in U(n)\] 
and note that again the projection $\overline{B}$ to $U(n)/O(n)$ is a closed path and $\det^2(B(\lambda))=(-1)^{n}e^{2\pi i\lambda}$. Hence

\[\gamma'_{nor}(\lambda):=B(\lambda)(\mathbb{R}^n\times\{0\})=\mathbb{R}(\sin(\pi\lambda)e_1-\cos(\pi\lambda)e_{n+1})+\sum^{2n}_{j=n+2}{\mathbb{R}e_j}\in\Lambda(n)\] 
is such that $\gamma'_{nor}(0)=\gamma'_{nor}(1)=\{0\}\times\mathbb{R}^n$ and $\mu_{Mas}(\gamma'_{nor})=1$.

\subsection{The Gap-Metric and the Spectral Flow}\label{section-spectralflow}
Our first aim of this section is to recall the definition of the gap-metric, where we follow Kato's monograph \cite{Kato}.\\
Let $H$ be a real Hilbert space and let $G(H)$ denote the set of all closed subspaces of $H$. For every $U\in G(H)$ there is a unique orthogonal projection $P_U$ onto $U$ which is a bounded operator on $H$. We set 

\[d_G(U,V)=\|P_U-P_V\|,\quad U,V\in G(H),\]
and note that this is obviously a metric on $G(H)$. The distance between two non-trivial subspaces $U,V\in G(H)$ can also be obtained as follows. Let $S_U$ denote the unit sphere in $U$ and $d(u,V)=\inf_{v\in V}{\|u-v\|}$. Then for $\delta(U,V)=\sup_{u\in S_U} d(u,V)$,

\begin{align}\label{gap}
d_G(U,V)=\max\{\delta(U,V),\delta(V,U)\},
\end{align}
which explains why $d_G(U,V)$ is called the \textit{gap} between $U$ and $V$.\\
We now consider operators $T:\mathcal{D}(T)\subset H\rightarrow H$ which we assume to be defined on a dense domain $\mathcal{D}(T)$. Let us recall that $T$ is called \textit{closed} if its graph $\gra(T)$ is closed in $H\times H$. If we denote by $\mathcal{C}(H)$ the set of all closed operators, then the gap-metric on $H\times H$ induces a metric on $\mathcal{C}(H)$ by

\[d_G(T,S)=d_G(\gra(T),\gra(S)), \quad S,T\in\mathcal{C}(H).\] 
As the adjoint of a densely defined operator is closed, every selfadjoint operator on $H$ belongs to the metric space $\mathcal{C}(H)$. Moreover, let us recall that a closed operator $T$ is called \textit{Fredholm} if its kernel and cokernel are of finite dimension. In what follows, we denote the subset of $\mathcal{C}(H)$ consisting of all $T$ which are selfadjoint and Fredholm by $\mathcal{CF}^\textup{sa}(H)$. It is well known that the spectrum $\sigma(T)$ of every selfadjoint operator is real. Moreover, if $T\in\mathcal{CF}^\textup{sa}(H)$ then $0$ is either in the resolvent set or an isolated eigenvalue of finite multiplicity (see e.g. \cite[Lemma 2.2.5]{Fredholm}).\\
It was shown in \cite{UnbSpecFlow} that for every $T\in\mathcal{CF}^\textup{sa}(H)$ there is $\varepsilon>0$ and a neighbourhood $\mathcal{N}_{T,\varepsilon}\subset\mathcal{CF}^\textup{sa}(H)$ of $T$ such that $\pm\varepsilon\notin\sigma(S)$ and the spectral projection $\chi_{[-\varepsilon,\varepsilon]}(S)$ is of finite rank for all $S\in\mathcal{N}_{T,\varepsilon}$. Let us now consider a path $\mathcal{A}=\{\mathcal{A}_\lambda\}_{\lambda\in I}$ in $\mathcal{CF}^\textup{sa}(H)$. There are $0=\lambda_0<\lambda_1<\ldots<\lambda_N=1$ such that the restriction of the path $\mathcal{A}$ to $[\lambda_{i-1},\lambda_i]$ is entirely contained in a neighbourhood $\mathcal{N}_{T_i,\varepsilon_i}$ as above for some $T_i\in\mathcal{CF}^\textup{sa}(H)$ and some $\varepsilon_i>0$. The \textit{spectral flow} of the path $\mathcal{A}$ is defined as

\begin{align}\label{sfl-def}
\sfl(\mathcal{A})=\sum^N_{i=1}{\left(\dim(\im(\chi_{[0,\varepsilon_i]}(\mathcal{A}_{\lambda_i}))-\dim(\im(\chi_{[0,\varepsilon_i]}(\mathcal{A}_{\lambda_{i-1}}))\right)}.
\end{align}
It follows by an argument of Phillips \cite{Phillips} that $\sfl(\mathcal{A})$ only depends on the path $\mathcal{A}$, and that the following fundamental property holds (see also \cite{UnbSpecFlow}).

\begin{enumerate}
\item[(i)] Let $h:I\times I\rightarrow\mathcal{CF}^\textup{sa}(H)$ be a homotopy such that the dimensions of the kernels of $h(s,0)$ and $h(s,1)$ are constant for all $s\in I$. Then 
\[\sfl(h(0,\cdot))=\sfl(h(1,\cdot)).\]
\end{enumerate}   
Moreover, it is easily seen from the definition of the spectral flow that

\begin{enumerate}
\item[(ii)] if the dimension of the kernel of $\mathcal{A}_\lambda$ is constant for all $\lambda\in I$, then $\sfl(\mathcal{A})=0$;
\item[(iii)] if $\mathcal{A}^1$ and $\mathcal{A}^2$ are two paths in $\mathcal{CF}^\textup{sa}(H)$ such that $\mathcal{A}^1_1=\mathcal{A}^2_0$, then

\[\sfl(\mathcal{A}^1\ast\mathcal{A}^2)=\sfl(\mathcal{A}^1)+\sfl(\mathcal{A}^2).\]
\end{enumerate}
Let us finally note two further elementary properties of the spectral flow which play a crucial role in our proof of Theorem \ref{main} below. The first of them has been used, e.g., in \cite[\S 7]{Pejsachowicz}.

\begin{lemma}\label{lemma-sflperturbation}
Let $\mathcal{A}:I\rightarrow\mathcal{CF}^\textup{sa}(H)$ be gap-continuous and set $\mathcal{A}^\delta=\mathcal{A}+\delta I_H$ for $\delta\in\mathbb{R}$. Then, for any sufficiently small $\delta>0$, $\mathcal{A}^\delta$ is a gap-continuous path in $\mathcal{CF}^\textup{sa}(H)$ and

\begin{align}\label{sfldelta}
\sfl(\mathcal{A})=\sfl(\mathcal{A}^\delta).
\end{align}
\end{lemma}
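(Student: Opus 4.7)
The plan is to verify the three assertions of the lemma in order: gap-continuity of $\mathcal{A}^\delta$, that $\mathcal{A}^\delta_\lambda$ remains in $\mathcal{CF}^\textup{sa}(H)$ uniformly in $\lambda$ for small $\delta$, and the equality \eqref{sfldelta}. For the first, I would note that the affine shift $T\mapsto T+\delta I_H$ is a gap-homeomorphism of $\mathcal{C}(H)$: the bounded bijection $\Psi_\delta:H\oplus H\to H\oplus H$, $(u,v)\mapsto(u,v+\delta u)$, has bounded inverse and satisfies $\gra(T+\delta I_H)=\Psi_\delta(\gra(T))$. Hence the orthogonal projections onto the graphs depend bi-Lipschitz on $T$, giving an estimate $d_G(T+\delta I_H,S+\delta I_H)\le C_\delta\,d_G(T,S)$, and gap-continuity transfers from $\mathcal{A}$ to $\mathcal{A}^\delta$.

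For the second point, selfadjointness is immediate, so only the Fredholm property needs attention. I would pick a partition $0=\lambda_0<\ldots<\lambda_N=1$ and neighbourhoods $\mathcal{N}_{T_i,\varepsilon_i}$ as in the definition of $\sfl(\mathcal{A})$. Because $\chi_{[-\varepsilon_i,\varepsilon_i]}(\mathcal{A}_\lambda)$ is of finite rank for every $\lambda\in[\lambda_{i-1},\lambda_i]$, the essential spectrum of $\mathcal{A}_\lambda$ misses $[-\varepsilon_i,\varepsilon_i]$. Since $\sigma_\textup{ess}(\mathcal{A}^\delta_\lambda)=\sigma_\textup{ess}(\mathcal{A}_\lambda)+\delta$, any $0<\delta<\min_i\varepsilon_i$ forces $0\notin\sigma_\textup{ess}(\mathcal{A}^\delta_\lambda)$ for all $\lambda\in I$, so $\mathcal{A}^\delta_\lambda\in\mathcal{CF}^\textup{sa}(H)$.

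For the spectral flow equality I would use the same partition and work directly with \eqref{sfl-def}. On each subinterval $[\lambda_{i-1},\lambda_i]$ the eigenvalues of $\mathcal{A}_\lambda$ lying in $[-\varepsilon_i,\varepsilon_i]$ trace finitely many continuous curves bounded away from $\pm\varepsilon_i$ by compactness. Hence I can choose $\varepsilon^\delta_i\in(0,\varepsilon_i)$ close to $\varepsilon_i$ and $\delta>0$ small so that $\pm\varepsilon^\delta_i-\delta$ avoids the spectrum of $\mathcal{A}_\lambda$ for all $\lambda$ in the subinterval and $[-\varepsilon^\delta_i-\delta,\varepsilon^\delta_i-\delta]\subset(-\varepsilon_i,\varepsilon_i)$; this ensures $\mathcal{A}^\delta_\lambda$ lies in a valid neighbourhood with parameter $\varepsilon^\delta_i$, so $\sfl(\mathcal{A}^\delta)$ is computed by \eqref{sfl-def} for the same partition. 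At each partition point the functional calculus identity
\[
\chi_{[0,\varepsilon^\delta_i]}(\mathcal{A}^\delta_{\lambda_i})=\chi_{[-\delta,\varepsilon^\delta_i-\delta]}(\mathcal{A}_{\lambda_i})
\]
together with two facts would give the equality of dimensions: for small $\delta$, isolation of $0$ in $\sigma(\mathcal{A}_{\lambda_i})$ gives $[-\delta,0)\cap\sigma(\mathcal{A}_{\lambda_i})=\emptyset$, and the gap between $\varepsilon_i$ and the nearest eigenvalue below gives $(\varepsilon^\delta_i-\delta,\varepsilon_i]\cap\sigma(\mathcal{A}_{\lambda_i})=\emptyset$. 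Consequently $\dim\im\chi_{[0,\varepsilon^\delta_i]}(\mathcal{A}^\delta_{\lambda_i})=\dim\im\chi_{[0,\varepsilon_i]}(\mathcal{A}_{\lambda_i})$, and plugging into \eqref{sfl-def} yields $\sfl(\mathcal{A}^\delta)=\sfl(\mathcal{A})$.

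The main obstacle is the bookkeeping in the last paragraph: one must simultaneously arrange the admissibility of $\varepsilon^\delta_i$ for $\mathcal{A}^\delta$ on the whole subinterval and the cancellation of contributions from the intervals $[-\delta,0)$ and $(\varepsilon^\delta_i-\delta,\varepsilon_i]$ at every endpoint $\lambda_i$, which requires a uniform spectral gap obtained from compactness of each subinterval and continuity of the finitely many eigenvalue curves inside $[-\varepsilon_i,\varepsilon_i]$. Once those uniform gaps are isolated, the remaining computation is elementary.
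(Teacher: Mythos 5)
Your proof is correct and takes essentially the same overall approach as the paper -- fix a partition and open neighbourhoods $\mathcal{N}_{T_i,\varepsilon_i}$ from the definition of $\sfl(\mathcal{A})$, and argue that each term in the defining sum is unchanged under the shift by $\delta$ -- but the details differ in a couple of places worth noting. For gap-continuity, the paper simply cites Kato Thm.~IV.2.17; your graph-transformation argument via $\Psi_\delta(u,v)=(u,v+\delta u)$ together with the oblique-vs-orthogonal projection inequality (Kato I.6.35, exactly as in Lemma \ref{lemma-conjuagtion}) is a valid and rather more self-contained alternative. (Note the triangle inequality alone would \emph{not} suffice: bounding $d_G(\mathcal{A}_\lambda^\delta,\mathcal{A}_\lambda)$ uniformly by a function of $\delta$ does not yield continuity in $\lambda$, so the bi-Lipschitz estimate you produce is really needed.)

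Where your bookkeeping is genuinely more complicated than the paper's is the spectral flow comparison. You re-choose radii $\varepsilon_i^\delta<\varepsilon_i$ and then have to verify separately that $\pm\varepsilon_i^\delta\notin\sigma(\mathcal{A}^\delta_\lambda)$ on the whole subinterval, that the finite ranks are constant, and that $(\varepsilon_i^\delta-\delta,\varepsilon_i]$ and $[-\delta,0)$ avoid $\sigma(\mathcal{A}_{\lambda_j})$ at the endpoints. The paper avoids all of this by \emph{keeping $\varepsilon_i$ fixed} and instead using that the straight-line homotopy $s\mapsto\mathcal{A}^{s\delta}_\lambda$ stays inside the \emph{same} open neighbourhood $\mathcal{N}_{T_i,\varepsilon_i}$ for $\delta$ small enough. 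This automatically gives $\pm\varepsilon_i\notin\sigma(\mathcal{A}^{s\delta}_\lambda)$ for all $s$, hence $\sigma(\mathcal{A}_{\lambda_j})\cap(\varepsilon_i-\delta,\varepsilon_i]=\emptyset$ for free, and one only needs to add the condition $\sigma(\mathcal{A}_{\lambda_j})\cap[-\delta,0)=\emptyset$ by hand. Your argument arrives at the same place, but you have to invoke a uniform spectral gap around $\pm\varepsilon_i$ (which does hold, by continuity of the resolvent in the gap topology and compactness of the subinterval) and you should make explicit that, with your choice of $\varepsilon_i^\delta$ and $\delta$, the restriction of $\mathcal{A}^\delta$ to $[\lambda_{i-1},\lambda_i]$ indeed lies in a single admissible neighbourhood so that formula \eqref{sfl-def} applies; this is asserted but not justified. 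Neither point is an error, but the paper's device of re-using $\mathcal{N}_{T_i,\varepsilon_i}$ unchanged is the cleaner route and worth absorbing.
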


\begin{proof}
We note at first that the operators $\mathcal{A}^\delta_\lambda$ are selfadjoint and Fredholm for $\delta$ sufficiently small, which follows from standard stability theory (see e.g. \cite{Kato}). Moreover, the path $\mathcal{A}^\delta$ is gap-continuous by \cite[Thm. IV.2.17]{Kato}, and so $\sfl(\mathcal{A}^\delta)$ is well defined.\\ 
To show \eqref{sfldelta}, let $0=\lambda_0<\ldots<\lambda_N=1$ be a partition of the unit interval and $\varepsilon_i>0$, $i=1,\ldots,N$, for $\mathcal{A}$ as in \eqref{sfl-def}. Let $\mathcal{N}_{T,\varepsilon_i}$ be an open neighbourhood of some $T\in\mathcal{CF}^{sa}(H)$ as in the construction of the spectral flow such that $\mathcal{A}_\lambda\in\mathcal{N}_{T,\varepsilon_i}$ for all $\lambda\in[\lambda_{i-1},\lambda_i]$. Now there is $\delta_i>0$ such that $\mathcal{A}^{s\delta_i}_\lambda\in\mathcal{N}_{T,\varepsilon_i}$ for all $s\in[0,1]$ and all  $\lambda\in [\lambda_{i-1},\lambda_i]$, i.e. the spectral projections $\chi_{[-\varepsilon_i,\varepsilon_i]}(\mathcal{A}^{s\delta_i}_{\lambda})$ are of the same finite rank. Moreover, by choosing $\delta_i>0$ smaller, we can assume that 

\[\sigma(\mathcal{A}_{\lambda_i})\cap[-\delta_i,0)=\sigma(\mathcal{A}_{\lambda_{i-1}})\cap[-\delta_i,0)=\{0\}.\]
Then, as $\sigma(\mathcal{A}^{\delta_i}_\lambda)=\sigma(\mathcal{A}_\lambda)+\delta_i$, we see that

\[\dim(\im(\chi_{[0,\varepsilon_i]}(\mathcal{A}_{\lambda})))=\dim(\im(\chi_{[0,\varepsilon_i]}(\mathcal{A}^{\delta_i}_{\lambda}))),\quad \lambda=\lambda_{i-1},\lambda_i.\]
If we now set $\delta=\min\{\delta_1,\ldots,\delta_N\}>0$, then
\[\dim(\im(\chi_{[0,\varepsilon_i]}(\mathcal{A}_{\lambda})))=\dim(\im(\chi_{[0,\varepsilon_i]}(\mathcal{A}^{\delta}_{\lambda}))),\quad \lambda=\lambda_{i-1},\lambda_i\]
holds simultaneously for this $\delta$ and all $i=1,\ldots,N$, and so the assertion follows from the definition \eqref{sfl-def}. 
\end{proof}  
\noindent
Finally, let us note the following stability of the spectral flow under conjugation by invertible operators, where we denote by $M^T$ the adjoint of an operator in the real Hilbert space $H$.

\begin{lemma}\label{lemma-conjuagtion}
Let $\mathcal{A}:I\rightarrow\mathcal{CF}^\textup{sa}(H)$ be a gap-continuous path and $M:I\rightarrow GL(H)$ a continuous family of bounded invertible operators. Then $\{M^T_\lambda\mathcal{A}_\lambda M_\lambda\}_{\lambda\in I}$ is gap-continuous and

\[\sfl(M^T\mathcal{A}M)=\sfl(\mathcal{A}).\]
\end{lemma}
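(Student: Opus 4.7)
My plan is to verify the structural assertions (each $M^T_\lambda\mathcal{A}_\lambda M_\lambda$ belongs to $\mathcal{CF}^{\textup{sa}}(H)$ and the family is gap-continuous) and then to prove the spectral flow equality by two consecutive applications of the homotopy invariance property (i) recalled in Section \ref{section-spectralflow}.

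For the structural part, the natural domain of $M^T_\lambda\mathcal{A}_\lambda M_\lambda$ is $M^{-1}_\lambda(\mathcal{D}(\mathcal{A}_\lambda))$, selfadjointness follows from a standard domain-chasing argument using $M^T_\lambda=M^\ast_\lambda$, and the Fredholm property is inherited because $\ker(M^T_\lambda\mathcal{A}_\lambda M_\lambda)=M^{-1}_\lambda(\ker\mathcal{A}_\lambda)$ and $\im(M^T_\lambda\mathcal{A}_\lambda M_\lambda)=M^T_\lambda(\im\mathcal{A}_\lambda)$. Gap-continuity rests on the observation that $\gra(M^T_\lambda\mathcal{A}_\lambda M_\lambda)=\Phi_\lambda(\gra(\mathcal{A}_\lambda))$, where $\Phi_\lambda(v,w)=(M^{-1}_\lambda v,M^T_\lambda w)$ is a norm-continuous family of bounded invertible operators on $H\times H$. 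A triangle-inequality split $d_G(\Phi_\lambda\gra(\mathcal{A}_\lambda),\Phi_{\lambda_0}\gra(\mathcal{A}_{\lambda_0}))\le d_G(\Phi_\lambda\gra(\mathcal{A}_\lambda),\Phi_{\lambda_0}\gra(\mathcal{A}_\lambda))+d_G(\Phi_{\lambda_0}\gra(\mathcal{A}_\lambda),\Phi_{\lambda_0}\gra(\mathcal{A}_{\lambda_0}))$, combined with the standard estimates for orthogonal projections from \cite[\S IV.2]{Kato}, then yields the required continuity.

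For the spectral flow equality, first apply the homotopy $h_1(s,\lambda)=M^T_{s\lambda}\mathcal{A}_\lambda M_{s\lambda}$. The path $h_1(s,0)=M^T_0\mathcal{A}_0M_0$ is literally constant in $s$, and at $\lambda=1$ the kernel of $h_1(s,1)=M^T_s\mathcal{A}_1M_s$ equals $M^{-1}_s(\ker\mathcal{A}_1)$, so its dimension is constant in $s$; property (i) therefore gives $\sfl(M^T\mathcal{A}M)=\sfl(M^T_0\mathcal{A}M_0)$, reducing the problem to conjugation by a single invertible operator $M_0$. Now use the polar decomposition $M_0=OP$ with $O$ orthogonal and $P=(M^T_0M_0)^{1/2}$ positive definite. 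Since $\log P$ is a bounded selfadjoint operator, $P^s=e^{s\log P}$ provides a continuous path in $GL(H)$ from $I_H$ to $P$, and the homotopy $h_2(s,\lambda)=P^sO^T\mathcal{A}_\lambda OP^s$ again satisfies the endpoint-kernel condition, whence $\sfl(O^T\mathcal{A}O)=\sfl(M^T_0\mathcal{A}M_0)$. The remaining identity $\sfl(O^T\mathcal{A}O)=\sfl(\mathcal{A})$ is immediate from the definition \eqref{sfl-def}: orthogonal conjugation commutes with functional calculus, so $\chi_{[-\varepsilon,\varepsilon]}(O^T\mathcal{A}_\lambda O)=O^T\chi_{[-\varepsilon,\varepsilon]}(\mathcal{A}_\lambda)O$ has the same finite rank and any partition data admissible for $\mathcal{A}$ is admissible for $O^T\mathcal{A}O$.

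I expect the main technical obstacle to be the gap-continuity, which requires a careful packaging of Kato's projection estimates; by contrast, the spectral flow equality becomes a routine two-step homotopy argument once the polar decomposition is invoked to sidestep any issue about connectedness of $GL(H)$.
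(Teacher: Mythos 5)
Your proof is correct, and while the gap-continuity half is essentially the paper's argument, the spectral flow half takes a genuinely different route worth noting. For gap-continuity the paper also starts from $\gra(M^T_\lambda\mathcal{A}_\lambda M_\lambda)=\Phi_\lambda\gra(\mathcal{A}_\lambda)$ (they call $\Phi_\lambda$ by $N_\lambda$), but instead of a triangle-inequality split they observe that $N_\lambda P_{\gra(\mathcal{A}_\lambda)}N^{-1}_\lambda$ is a continuous family of oblique projections onto the transformed graphs, and then invoke Kato's Theorem I.6.35 to pass from oblique to orthogonal projections in a single estimate; the conclusion is the same. For the spectral flow equality the paper's argument is one homotopy: shrink $M$ to a constant path and then connect that constant to $I_H$ inside $GL(H)$, citing that $GL(H)$ is connected, after which homotopy invariance (i) applies since conjugation preserves kernel dimensions. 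You replace this with a two-step scheme — first $h_1(s,\lambda)=M^T_{s\lambda}\mathcal{A}_\lambda M_{s\lambda}$ to reduce to a constant conjugator $M_0$, then polar decomposition $M_0=OP$ together with the explicit path $P^s=e^{s\log P}$ to reduce to an orthogonal $O$, and finally the observation that $\chi_{[-\varepsilon,\varepsilon]}(O^T\mathcal{A}_\lambda O)=O^T\chi_{[-\varepsilon,\varepsilon]}(\mathcal{A}_\lambda)O$, so $\sfl(O^T\mathcal{A}O)=\sfl(\mathcal{A})$ follows directly from the definition \eqref{sfl-def}. Your route is a little longer but buys self-containment: you never need to know that $GL(H)$ (or the orthogonal group of a real Hilbert space) is path-connected, only that positive invertibles have logarithms and that orthogonal conjugation commutes with functional calculus. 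Both approaches are sound; the paper's is shorter because it is willing to cite connectedness of $GL(H)$, while yours is more explicit and avoids that external input entirely.
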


\begin{proof}
Note that

\begin{align*}
\gra(M^T_\lambda\mathcal{A}_\lambda M_\lambda)&=\{(u,M^T_\lambda\mathcal{A}_\lambda M_\lambda u):\, u\in M^{-1}_\lambda(\mathcal{D}(\mathcal{A}_\lambda))\}=\{(M^{-1}_\lambda v,M^T_\lambda\mathcal{A}_\lambda v):\,v\in\mathcal{D}(\mathcal{A}_\lambda)\}\\
&=\begin{pmatrix}
M^{-1}_\lambda&0\\
0&M^T_\lambda
\end{pmatrix}\,\gra(\mathcal{A}_\lambda)=:N_\lambda\gra(\mathcal{A}_\lambda)\subset H\times H,
\end{align*}
and so $\{N_\lambda P_{\gra(\mathcal{A}_\lambda)}N^{-1}_\lambda\}_{\lambda\in I}$ is a continuous family of oblique projections onto\linebreak $\{\gra(M^T_\lambda\mathcal{A}_\lambda M_\lambda)\}_{\lambda\in I}$ in $\mathcal{L}(H\times H)$. By \cite[Thm. I.6.35]{Kato}, we have for the corresponding orthogonal projections $P_{\gra(M^T_\lambda\mathcal{A}_\lambda M_\lambda)}$ onto $\gra(M^T_\lambda\mathcal{A}_\lambda M_\lambda)$ the inequality

\[\|P_{\gra(M^T_\mu\mathcal{A}_\mu M_\mu)}-P_{\gra(M^T_\lambda\mathcal{A}_\lambda M_\lambda)}\|\leq \|N_\mu P_{\gra(\mathcal{A}_\mu)}N^{-1}_\mu-N_\lambda P_{\gra(\mathcal{A}_\lambda)}N^{-1}_\lambda\|,\quad \mu,\lambda\in I.\]
Consequently, $\{P_{\gra(M^T_\lambda\mathcal{A}_\lambda M_\lambda)}\}_{\lambda\in I}$ is continuous, which shows that $M^T\mathcal{A}M$ is gap-continuous.\\ 
For the equality of the spectral flows, we just need to note that $M$ is homotopic inside $GL(H)$ to the constant path given by the identity $I_H$. Let us point out that this does not even require Kuiper's Theorem as we just need to shrink $M$ to a constant path and use that $GL(H)$ is connected. As the conjugation preserves kernel dimensions, we obtain by the homotopy invariance (i) from above

\begin{align*}
\sfl(M^T\mathcal{A}M)=\sfl(\mathcal{A}).
\end{align*}
\end{proof}


\section{Proof of Theorem \ref{main}}\label{section-proof}
The proof of Theorem \ref{main} falls naturally into two parts. In the first part we deal with the continuity of families of the type \eqref{ops-def}, where we actually consider a slightly more general setting. In the second part we show the spectral flow formula in Theorem \ref{main}.

\subsection{Continuity}
To simplify notation, we set $E=L^2(I,\mathbb{R}^{2n})$ and $H=H^1(I,\mathbb{R}^{2n})$. The aim of this step is to prove the following proposition, which we will later apply in the cases $X=I$ and $X=I\times I$.

\begin{prop}\label{prop-continuity}
Let $X$ be a metric space and $\gamma_1,\gamma_2:X\rightarrow\Lambda(n)$ two families of Lagrangian subspaces in $\mathbb{R}^{2n}$. Then

\[\mathcal{A}:X\rightarrow \mathcal{CF}^\textup{sa}(E),\quad (\mathcal{A}_\lambda u)(t)=Ju'(t),\]
where
\[\mathcal{D}(\mathcal{A}_\lambda)=\{u\in H: u(0)\in\gamma_1(\lambda), u(1)\in\gamma_2(\lambda)\},\]
is continuous with respect to the gap-metric on $\mathcal{CF}^\textup{sa}(E)$. 
\end{prop}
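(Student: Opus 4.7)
That each $\mathcal{A}_\lambda$ belongs to $\mathcal{CF}^\textup{sa}(E)$ is a standard elementary computation: symmetry follows from integration by parts using that $\gamma_1(\lambda)$ and $\gamma_2(\lambda)$ are Lagrangian (the boundary terms $u(t)^T J v(t)$ vanish at $t=0,1$), and the selfadjoint Fredholm property is then a routine consequence of ODE regularity on $I$ together with the fact that $\ker \mathcal{A}_\lambda \cong \gamma_1(\lambda) \cap \gamma_2(\lambda)$ is finite-dimensional and the boundary condition counts the codimension of the range. The substantive content of the proposition is the gap-continuity of $\lambda \mapsto \mathcal{A}_\lambda$, which I plan to prove locally around an arbitrary $\lambda_0 \in X$.

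The key construction is a family of bounded isomorphisms of $E$ that intertwines the varying domains with $\mathcal{D}(\mathcal{A}_{\lambda_0})$. Since the inclusion $\Lambda(n) \hookrightarrow G_n(\mathbb{R}^{2n})$ is a topological embedding, continuity of $\gamma_i$ is equivalent to continuity of the orthogonal projections $P_i(\lambda) \in \mathcal{L}(\mathbb{R}^{2n})$ onto $\gamma_i(\lambda)$. Applying Kato's standard intertwining formula (see \cite{Kato}) to each $P_i$ yields, on some neighborhood $\mathcal{U}$ of $\lambda_0$, continuous maps $U_i : \mathcal{U} \to O(2n)$ with $U_i(\lambda_0) = I$ and $U_i(\lambda)\gamma_i(\lambda_0) = \gamma_i(\lambda)$. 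Picking $\phi \in C^\infty(I,\mathbb{R})$ with $\phi(0) = 1$, $\phi(1) = 0$, I would set
\[M_\lambda(t) = \phi(t)\,U_1(\lambda) + (1 - \phi(t))\,U_2(\lambda).\]
For $\lambda$ close enough to $\lambda_0$, $M_\lambda(t) \in \GL(2n,\mathbb{R})$ for every $t \in I$ (being close to $I$), the assignment $\lambda \mapsto M_\lambda$ is continuous into $C^1(I, M(2n,\mathbb{R}))$, and multiplication by $M_\lambda$ defines a bounded isomorphism of both $E$ and $H$. Since $M_\lambda(0) = U_1(\lambda)$ and $M_\lambda(1) = U_2(\lambda)$, this multiplication carries $\mathcal{D}(\mathcal{A}_{\lambda_0})$ bijectively onto $\mathcal{D}(\mathcal{A}_\lambda)$.

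The gap estimate is then direct. Given $(u, Ju') \in \gra(\mathcal{A}_{\lambda_0})$ with $\|u\|_{L^2}^2 + \|u'\|_{L^2}^2 \leq 1$, put $v = M_\lambda u \in \mathcal{D}(\mathcal{A}_\lambda)$, so $(v, Jv') \in \gra(\mathcal{A}_\lambda)$. Since $v' = M_\lambda' u + M_\lambda u'$ and $J$ is orthogonal, an elementary estimate gives
\[\|(v - u,\, Jv' - Ju')\|_{E \times E} \leq C\bigl(\|M_\lambda - I\|_{L^\infty} + \|M_\lambda'\|_{L^\infty}\bigr),\]
and the right-hand side tends to $0$ as $\lambda \to \lambda_0$ because $U_i(\lambda) \to I$. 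This bounds $\delta(\gra(\mathcal{A}_{\lambda_0}), \gra(\mathcal{A}_\lambda))$; running the same argument with $M_\lambda^{-1}$ in place of $M_\lambda$ bounds $\delta(\gra(\mathcal{A}_\lambda), \gra(\mathcal{A}_{\lambda_0}))$, and \eqref{gap} then gives gap-continuity at $\lambda_0$. The main point requiring attention is the $\|M_\lambda'\|_{L^\infty}$ contribution, which has no analogue in the pure conjugation of Lemma \ref{lemma-conjuagtion}; however, this is not a real obstacle since $M_\lambda'(t) = \phi'(t)(U_1(\lambda) - U_2(\lambda))$ vanishes uniformly in $t$ as $\lambda \to \lambda_0$. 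The only other subtlety is that the Kato formula applied to orthogonal projections automatically produces orthogonal (not merely invertible) transformations, so that $U_i(\lambda)\gamma_i(\lambda_0)$ remains Lagrangian — a finite-dimensional check.
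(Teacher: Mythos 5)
Your plan is correct, but it takes a genuinely different route from the paper's. The paper does not build isomorphisms between domains at all; instead it defines, for a fixed $\lambda_0$, an idempotent
\[(P_{\lambda_0}w)(t)=w(t)-(1-t)(I_{2n}-\hat{P}_{\lambda_0})w(0)-t(I_{2n}-\tilde{P}_{\lambda_0})w(1)\]
on $H$ whose image lies in $\mathcal{D}(\mathcal{A}_{\lambda_0})$, and then, for $u\in\mathcal{D}(\mathcal{A}_\lambda)$, estimates $\|u-P_{\lambda_0}u\|_H$ directly. Using that $\hat P_\lambda u(0)=u(0)$ and $\tilde P_\lambda u(1)=u(1)$, this reduces to bounding $\|(I_{2n}-\hat P_{\lambda_0})\hat P_\lambda\|$ and $\|(I_{2n}-\tilde P_{\lambda_0})\tilde P_\lambda\|$, which Kato's theorem on pairs of projections (\cite[I.6.34]{Kato}) identifies with $\|\hat P_\lambda-\hat P_{\lambda_0}\|$ and $\|\tilde P_\lambda-\tilde P_{\lambda_0}\|$ once $\lambda$ is close to $\lambda_0$. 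Your approach instead manufactures bounded operators $M_\lambda$ of multiplication by $M_\lambda(t)=\phi(t)U_1(\lambda)+(1-\phi(t))U_2(\lambda)$, built from Kato's intertwining unitaries, carrying $\mathcal{D}(\mathcal{A}_{\lambda_0})$ onto $\mathcal{D}(\mathcal{A}_\lambda)$, and then estimates $\|(M_\lambda u-u,\,J(M_\lambda u)'-Ju')\|_{E\times E}$. Both arguments are sound and hinge on the same two structural facts (continuity of the projections onto $\gamma_i(\lambda)$, and some interpolation in $t$ to pass from the boundary correction to an operator on the whole interval). What the paper's construction buys is economy: it avoids the intertwining formula and the invertibility argument entirely, using only projection identities and an $L^\infty$-in-$t$ estimate, which fits its stated aim of proving continuity ``from first principles''. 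What your construction buys is a family of explicit isomorphisms intertwining the $\mathcal{A}_\lambda$ up to a controlled perturbation, which sits naturally alongside Lemma \ref{lemma-conjuagtion} and makes the link to conjugation-invariance transparent; you correctly flag the $\|M_\lambda'\|_{L^\infty}$ term as the one piece with no analogue in that lemma, and your handling of it is right. One small remark: your final ``subtlety'' about orthogonality of $U_i(\lambda)$ ensuring that $U_i(\lambda)\gamma_i(\lambda_0)$ stays Lagrangian is a red herring — the intertwining relation already gives $U_i(\lambda)\gamma_i(\lambda_0)=\gamma_i(\lambda)$, which is Lagrangian by hypothesis, so no orthogonality (and no finite-dimensional check) is actually needed for that step; orthogonality is merely convenient for norm bounds.
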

\noindent
We want to use \eqref{gap} and consider 

\[\delta(\gra(\mathcal{A}_\lambda),\gra(\mathcal{A}_{\lambda_0})).\]
Note at first that for $u\in\mathcal{D}(\mathcal{A}_\lambda)$ and $v\in\mathcal{D}(\mathcal{A}_{\lambda_0})$

\begin{align}\label{gapcontI}
\begin{split}
\|(u,\mathcal{A}_\lambda u)-(v,\mathcal{A}_{\lambda_0}v)\|_{E\oplus E}&=\|(u-v,J(u'-v'))\|_{E\oplus E}\\
&\leq \left(\|u-v\|^2_{E}+\|J\|\|u'-v'\|^2_{E}\right)^\frac{1}{2}\\
&=\|u-v\|_{H},
\end{split}
\end{align}
where we have used that $\|J\|=1$. Let us recall that the topology of $G_n(\mathbb{R}^{2n})$ is induced by the metric $d(L,M)=\|P_L-P_M\|$, where $P_L, P_M\in M(2n,\mathbb{R})$ are the orthogonal projections onto $L$ and $M$, respectively. Hence, by the continuity of $\gamma_1$ and $\gamma_2$, there are two families of orthogonal projections $\hat{P},\tilde{P}:X\rightarrow M(2n,\mathbb{R})$ such that 

\[\im(\hat{P}_\lambda)=\gamma_1(\lambda),\quad \im(\tilde{P}_\lambda)=\gamma_2(\lambda),\quad\lambda\in X.\]
We define for $w\in H$

\[(P_\lambda w)(t)=w(t)-(1-t)(I_{2n}-\hat{P}_\lambda)w(0)-t(I_{2n}-\tilde{P}_\lambda)w(1).\]
It is easily seen that $P^2_\lambda w=P_\lambda w$, as well as $P_\lambda w\in\mathcal{D}(\mathcal{A}_\lambda)$ for all $w\in H$ and $\lambda\in X$, which shows that

\begin{align}\label{gapcontII}
\inf_{v\in\mathcal{D}(\mathcal{A}_{\lambda_0})}\|u-v\|_{H}\leq\|u-P_{\lambda_0}u\|_{H}.
\end{align}
As 

\[u(t)-(P_{\lambda_0}u)(t)=(1-t)(I_{2n}-\hat{P}_{\lambda_0})u(0)+t(I_{2n}-\tilde{P}_{\lambda_0})u(1),\]
it follows for $u\in\mathcal{D}(\mathcal{A}_\lambda)$ that

\begin{align}\label{gapcontIII}
\begin{split}
\|u-P_{\lambda_0}u\|_{H}&\leq 2(\|(I_{2n}-\hat{P}_{\lambda_0})u(0)\|+\|(I_{2n}-\tilde{P}_{\lambda_0})u(1)\|)\\
&= 2(\|(I_{2n}-\hat{P}_{\lambda_0})\hat{P}_\lambda u(0)\|+\|(I_{2n}-\tilde{P}_{\lambda_0})\tilde{P}_\lambda u(1)\|)\\
&\leq 2(\|(I_{2n}-\hat{P}_{\lambda_0})\hat{P}_\lambda\|\|u(0)\|+\|(I_{2n}-\tilde{P}_{\lambda_0})\tilde{P}_\lambda\|\|u(1)\|),
\end{split}
\end{align}
where we have used that $u\in\mathcal{D}(\mathcal{A}_\lambda)$ and so $\hat{P}_\lambda u(0)=u(0)$ and $\tilde{P}_\lambda u(1)=u(1)$. Let us note that the factor $2$ appears in the previous estimate as we are dealing with the norm on $H$ and so we also need to take into account the derivatives of $u-P_{\lambda_0}u$ with respect to $t$.\\
Since the point evaluation is continuous in $H$, there is a constant $\alpha>0$ such that for $t=0$ and $t=1$

\begin{align}\label{gapcontIV}
\|u(t)\|\leq\alpha\|u\|_{H}=\alpha\left(\|u\|^2_{E}+\|u'\|^2_{E}\right)^\frac{1}{2}=\alpha\left(\|u\|^2_{E}+\|Ju'\|^2_{E}\right)^\frac{1}{2},
\end{align}
where we use that $J$ is an isometry on $\mathbb{R}^{2n}$. Hence, by \eqref{gapcontI}--\eqref{gapcontIV},

\begin{align*}
d((u,\mathcal{A}_\lambda u),\gra(\mathcal{A}_{\lambda_0}))&=\inf_{v\in\mathcal{D}(\mathcal{A}_{\lambda_0})}\|(u,\mathcal{A}_\lambda u)-(v,\mathcal{A}_{\lambda_0}v)\|_{E\oplus E}\\
&\leq \inf_{v\in\mathcal{D}(\mathcal{A}_{\lambda_0})}\|u-v\|_{H}\leq\|u-P_{\lambda_0}u\|_{H}\\
&\leq 2(\|(I_{2n}-\hat{P}_{\lambda_0})\hat{P}_\lambda\|\|u(0)\|+\|(I_{2n}-\tilde{P}_{\lambda_0})\tilde{P}_\lambda\|\|u(1)\|)\\
&\leq 2\alpha(\|(I_{2n}-\hat{P}_{\lambda_0})\hat{P}_\lambda\|+\|(I_{2n}-\tilde{P}_{\lambda_0})\tilde{P}_\lambda\|)(\|u\|^2_{E}+\|Ju'\|^2_{E})^\frac{1}{2}. 
\end{align*}
As the unit sphere in $\gra(\mathcal{A}_\lambda)$ is given by 

\[\{(u,\mathcal{A}_\lambda u):\, u\in\mathcal{D}(\mathcal{A}_\lambda),\, \|u\|^2_{E}+\|Ju'\|^2_{E}=1\},\]
we finally get

\begin{align}\label{gapfinalI}
\begin{split}
\delta(\gra(\mathcal{A}_\lambda),\gra(\mathcal{A}_{\lambda_0}))&=\sup\{d((u,\mathcal{A}_\lambda u),\gra(\mathcal{A}_{\lambda_0})):\,u\in\mathcal{D}(\mathcal{A}_\lambda),\,\|u\|^2+\|Ju'\|^2=1\}\\
&\leq 2\alpha(\|(I_{2n}-\hat{P}_{\lambda_0})\hat{P}_\lambda\|+\|(I_{2n}-\tilde{P}_{\lambda_0})\tilde{P}_\lambda\|).
\end{split}
\end{align}
Note that if we swap $\lambda$ and $\lambda_0$ and repeat the above argument, we also have

\begin{align}\label{gapfinalII}
\delta(\gra(\mathcal{A}_{\lambda_0}),\gra(\mathcal{A}_{\lambda}))\leq 2\alpha(\|(I_{2n}-\hat{P}_{\lambda})\hat{P}_{\lambda_0}\|+\|(I_{2n}-\tilde{P}_{\lambda})\tilde{P}_{\lambda_0}\|).
\end{align}
To finish the proof, we need the following well-known theorem that can be found, e.g., in \cite[I.6.34]{Kato}.

\begin{theorem}
Let $E$ be a Hilbert space and $P,Q$ orthogonal projections in $E$. If 

\[\|(I_E-P)Q\|<1\,\,\text{and  } \|(I_E-Q)P\|<1,\]
then 

\[\|(I_E-P)Q\|=\|(I_E-Q)P\|=\|P-Q\|.\]
\end{theorem}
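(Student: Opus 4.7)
My plan is to prove $\|(I_E-P)Q\|=\|(I_E-Q)P\|=\|P-Q\|$ from just two ingredients: the $C^\ast$-identity $\|S\|^2=\|S^\ast S\|$, and the general fact that $T^\ast T$ and $TT^\ast$ share the same non-zero spectrum.

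\textbf{Step 1 (reduction to positive self-adjoint operators).} By the $C^\ast$-identity,
\[\|(I_E-P)Q\|^2=\|Q-QPQ\|,\qquad \|(I_E-Q)P\|^2=\|P-PQP\|.\]
Each right-hand operator vanishes on the kernel of the outer projection and restricts, on its range, to $I-QPQ|_{\im(Q)}$ resp.\ $I-PQP|_{\im(P)}$, where $QPQ|_{\im(Q)}=(P|_{\im(Q)})^\ast P|_{\im(Q)}$ and the analogous $PQP|_{\im(P)}$ are positive with spectrum in $[0,1]$. Hence
\[\|(I_E-P)Q\|^2=1-\min\sigma\bigl(QPQ|_{\im(Q)}\bigr),\qquad \|(I_E-Q)P\|^2=1-\min\sigma\bigl(PQP|_{\im(P)}\bigr),\]
and the hypotheses translate into $0\notin\sigma(QPQ|_{\im(Q)})$ and $0\notin\sigma(PQP|_{\im(P)})$.

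\textbf{Step 2 ($\|(I_E-P)Q\|=\|(I_E-Q)P\|$).} Setting $T=PQ$ gives $T^\ast T=QPQ$ and $TT^\ast=PQP$, so $\sigma(QPQ)\setminus\{0\}=\sigma(PQP)\setminus\{0\}$. Since $QPQ$ (resp.\ $PQP$) is block-diagonal with respect to $E=\im(Q)\oplus\ker(Q)$ (resp.\ $E=\im(P)\oplus\ker(P)$) and vanishes on the second summand, the non-zero spectrum of the operator equals that of its restriction. Combined with Step~1, this forces $\sigma(QPQ|_{\im(Q)})=\sigma(PQP|_{\im(P)})$, and the two norms agree.

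\textbf{Step 3 ($\|P-Q\|$ equals both).} A direct computation shows $(P-Q)^2=P+Q-PQ-QP$ commutes with $P$, so $E=\im(P)\oplus\ker(P)$ reduces $(P-Q)^2$. On $\im(P)$ it equals $P-PQP$, of norm $\|(I_E-Q)P\|^2$ by Step~1. On $\ker(P)$ it coincides with the restriction of $(I_E-P)Q(I_E-P)=[(I_E-P)Q][(I_E-P)Q]^\ast$, which already vanishes on $\im(P)$, so its norm is $\|(I_E-P)Q\|^2$. Since $P-Q$ is self-adjoint, $\|P-Q\|^2=\|(P-Q)^2\|$ is the maximum of these two, and Step~2 shows they coincide.

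The key obstacle is Step~2: one has to weave together the purely algebraic identities $QPQ|_{\ker Q}=0$ and $PQP|_{\ker P}=0$ with the $C^\ast$-algebraic coincidence of non-zero spectra, and it is here that the strict inequalities of the hypothesis are essential, precisely in order to evict $0$ from both restricted spectra; without them the two quantities $\|(I_E-P)Q\|$ and $\|(I_E-Q)P\|$ can genuinely differ, as the example $P=I_E$ with $Q$ a proper projection shows.
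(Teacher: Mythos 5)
Your proof is correct. Note that the paper does not prove this statement itself; it is quoted verbatim from Kato \cite[Thm. I.6.34]{Kato} precisely because the authors want to use it as a black box. So there is no ``paper's own proof'' to compare against, and you have supplied a genuine, self-contained argument. Your route is the clean $C^\ast$-algebraic one: reduce $\|(I_E-P)Q\|^2=\|Q-QPQ\|$ and $\|(I_E-Q)P\|^2=\|P-PQP\|$ via the $C^\ast$-identity, identify these with $1-\min\sigma(QPQ|_{\im Q})$ and $1-\min\sigma(PQP|_{\im P})$, invoke $\sigma(T^\ast T)\setminus\{0\}=\sigma(TT^\ast)\setminus\{0\}$ for $T=PQ$, and use the hypotheses exactly to evict $0$ from the two restricted spectra so that the minima agree; then, for $\|P-Q\|$, observe that $(P-Q)^2=P+Q-PQ-QP$ commutes with $P$, is block-diagonal with respect to $E=\im P\oplus\ker P$, and the two blocks are $P-PQP$ and $(I_E-P)Q(I_E-P)$, whose norms are $\|(I_E-Q)P\|^2$ and $\|(I_E-P)Q\|^2$. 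Kato's proof in the cited monograph is organized somewhat differently, going through the individual inequalities $\|(I_E-P)Q\|\leq\|P-Q\|$, $\|(I_E-Q)P\|\leq\|P-Q\|$ (which follow from $(I_E-P)Q=(I_E-P)(Q-P)$) together with the construction of an explicit isometry between $\im P$ and $\im Q$; your spectral argument replaces that construction by the abstract coincidence of the non-zero spectra of $T^\ast T$ and $TT^\ast$, which makes the symmetry between the two quantities transparent and also explains exactly where the strict inequalities in the hypothesis enter.

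One minor point worth flagging: your Step~1 formula $\|(I_E-P)Q\|^2=1-\min\sigma(QPQ|_{\im Q})$ tacitly assumes $\im Q\neq\{0\}$, and the spectral comparison in Step~2 tacitly assumes $\ker Q\neq\{0\}$ and $\ker P\neq\{0\}$ (so that $\sigma(QPQ)=\sigma(QPQ|_{\im Q})\cup\{0\}$ and likewise for $P$). These degenerate cases are harmless: if $Q=0$ the hypothesis $\|(I_E-Q)P\|=\|P\|<1$ forces $P=0$, if $Q=I_E$ the hypothesis $\|(I_E-P)Q\|=\|I_E-P\|<1$ forces $P=I_E$, and symmetrically for $P$; in all of them the three norms are trivially equal. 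It would be worth one sentence in a polished write-up to dispose of them explicitly.
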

\noindent
Now, as $(I_{2n}-\hat{P}_{\lambda})\hat{P}_{\lambda_0}=(I_{2n}-\hat{P}_{\lambda_0})\hat{P}_{\lambda}=0$ for $\lambda=\lambda_0$, we have for all $\lambda$ in a neighbourhood of $\lambda_0$

\[\|(I_{2n}-\hat{P}_{\lambda})\hat{P}_{\lambda_0}\|=\|(I_{2n}-\hat{P}_{\lambda_0})\hat{P}_{\lambda}\|=\|\hat{P}_{\lambda}-\hat{P}_{\lambda_0}\|\]
and likewise
\[\|(I_{2n}-\tilde{P}_{\lambda})\tilde{P}_{\lambda_0}\|=\|(I_{2n}-\tilde{P}_{\lambda_0})\tilde{P}_{\lambda}\|=\|\tilde{P}_\lambda-\tilde{P}_{\lambda_0}\|.\]
Consequently, we obtain from \eqref{gap}, \eqref{gapfinalI} and \eqref{gapfinalII} for all $\lambda$ sufficiently close to $\lambda_0$

\begin{align*}
d_G(\mathcal{A}_\lambda,\mathcal{A}_{\lambda_0})&=\max\{\delta(\gra(\mathcal{A}_\lambda),\gra(\mathcal{A}_{\lambda_0})),\delta(\gra(\mathcal{A}_{\lambda_0}),\gra(\mathcal{A}_{\lambda}))\}\\
&\leq 2\alpha(\|\hat{P}_\lambda-\hat{P}_{\lambda_0}\|+\|\tilde{P}_\lambda-\tilde{P}_{\lambda_0}\|),
\end{align*}
which shows that $\mathcal{A}=\{\mathcal{A}_\lambda\}_{\lambda\in X}$ is indeed continuous in $\mathcal{CF}(E)$. Hence Proposition \ref{prop-continuity} is shown.


\subsection{The Spectral Flow Formula}
We now prove the spectral flow formula in Theorem \ref{main} in two steps.

\subsection*{Step 1: Theorem \ref{main} for admissible paths}
We begin this first step of our proof with the following elementary observation.

\begin{lemma}\label{lemma-connected}
The set of all transversal pairs in $\Lambda(n)$, i.e. 

\begin{align}\label{set}
\{(L_1,L_2)\in\Lambda(n)\times\Lambda(n):\, L_1\cap L_2=\{0\}\}\subset\Lambda(n)\times\Lambda(n),
\end{align}
is path-connected.
\end{lemma}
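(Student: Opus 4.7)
The plan is to show that every transversal pair $(L_1,L_2)$ can be joined by a path inside \eqref{set} to the fixed reference pair $(L^1_0,L^2_0):=(\mathbb{R}^n\times\{0\},\{0\}\times\mathbb{R}^n)$, using only two ingredients already developed in Section~\ref{section-Maslov}: the transitivity of the $U(n)$-action on $\Lambda(n)$ induced by \eqref{diffeoU}, and the contractibility of the open stratum $\Lambda_0(L_0)$.

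First I would invoke transitivity to pick $A\in U(n)$ with $AL^1_0=L_1$. Since $U(n)$ is path-connected, there is a continuous path $t\mapsto A_t$ in $U(n)$ with $A_0=I_{2n}$ and $A_{1/2}=A$. The associated path $t\mapsto(A_tL^1_0,A_tL^2_0)$ is then continuous in $\Lambda(n)\times\Lambda(n)$ (the continuity of $U(n)\to\Lambda(n)$ is immediate from the fact that \eqref{diffeoU} is a diffeomorphism), and it remains inside \eqref{set} throughout, since any invertible matrix preserves transversality of two subspaces. At time $1/2$ this path ends at $(L_1,AL^2_0)$. Second, both $AL^2_0$ and $L_2$ lie in $\Lambda_0(L_1)$; by the contractibility (hence path-connectedness) of $\Lambda_0(L_1)$ cited in Section~\ref{section-Maslov}, I can choose a path $\beta:[1/2,1]\to\Lambda_0(L_1)$ from $AL^2_0$ to $L_2$, so that $t\mapsto(L_1,\beta_t)$ stays in \eqref{set}. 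Concatenating the two segments yields a path in \eqref{set} from $(L^1_0,L^2_0)$ to $(L_1,L_2)$, and since the reference pair is fixed, this establishes path-connectedness.

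I do not expect a serious obstacle here: the harder of the two ingredients, the contractibility of $\Lambda_0(L_0)$, has already been taken for granted in Section~\ref{section-Maslov}, and path-connectedness of $U(n)$ is standard. An alternative one-step route would be to observe that $\Sp(2n,\mathbb{R})$ acts transitively on the set \eqref{set} (extend a basis of $L_1$ to a symplectic basis using the dual basis of $L_2$, which exists because $\omega_0$ pairs $L_1$ and $L_2$ non-degenerately once they are transversal Lagrangians) and is path-connected, but the two-step version above has the advantage of using nothing beyond what the paper has already set up.
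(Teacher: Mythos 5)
Your proof is correct, and it takes a genuinely different route from the one in the paper. The paper connects two arbitrary transversal pairs $(L_1,L_2)$ and $(L_3,L_4)$ directly by a three-segment path: first replacing $L_1$ by a slight rotation $L_1'=e^{\Theta J}L_1$ that is transversal to both $L_2$ and $L_4$, then moving the second component from $L_2$ to $L_4$ inside the contractible stratum $\Lambda_0(L_1')$, and finally moving the first component from $L_1'$ to $L_3$ inside $\Lambda_0(L_4)$. You instead connect an arbitrary transversal pair to a fixed base pair $(\mathbb{R}^n\times\{0\},\{0\}\times\mathbb{R}^n)$ in two segments: first transporting the whole pair by a path $A_t\in U(n)$ chosen via the transitivity and path-connectedness of $U(n)$ (transversality is preserved since $A_t(L^1_0\cap L^2_0)=A_tL^1_0\cap A_tL^2_0$), and then moving the second component within the contractible stratum $\Lambda_0(L_1)$. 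Both arguments hinge on the contractibility of $\Lambda_0(L_0)$; what differs is the mechanism for normalising the first component. The paper's $e^{\Theta J}$ perturbation reuses machinery it already needs to define the Maslov index for non-admissible pairs, so it is slightly more economical in references. Your base-point argument is cleaner as a piece of topology (it uses the standard reduction of path-connectedness to connecting everything to a fixed reference), and your remark that $\Sp(2n,\mathbb{R})$ acts transitively on transversal pairs and is path-connected would give an even shorter one-step proof, though it requires recalling the symplectic basis extension, which the paper does not otherwise use.
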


\begin{proof}
Let us first recall the well-known fact that $\Lambda_0(L_0)$ is contractible, and hence path-connected, for any $L_0\in\Lambda(n)$ (see \cite[Rem. 2.5.3]{PiccioneBook}). Now let $(L_1,L_2)$ and $(L_3,L_4)$ be two transversal pairs. As in the construction of the Maslov index in Section \ref{section-Maslov}, $L'_1=e^{\Theta J}L_1$ is transversal to $L_2$ and $L_4$ for any sufficiently small $\Theta>0$. In particular, we obtain a path connecting $(L_1,L_2)$ and $(L'_1,L_2)$ inside \eqref{set}. Also, as $\Lambda_0(L'_1)$ is path-connected, there is a path connecting $(L'_1,L_2)$ and $(L'_1,L_4)$ inside \eqref{set}. Finally, there is a path from $(L'_1,L_4)$ to $(L_3,L_4)$ inside \eqref{set} as $\Lambda_0(L_4)$ is path-connected.     
\end{proof}
\noindent
This step of the proof is based on the following proposition in which we denote by $\Omega^2$ the set of all admissible pairs of paths in $\Lambda(n)$ (see \eqref{introduction-transversal}). Let us note that by Section \ref{section-gammanor} and (v') in Section \ref{section-Maslov}, $\mu_{Mas}(\gamma_{nor},L_1)=1$ and $\mu_{Mas}(L_0,\gamma'_{nor})=-1$, where $L_0=\mathbb{R}^n\times\{0\}$ and $L_1=\{0\}\times\mathbb{R}^n$. 

\begin{prop}\label{prop-Maslov}
Let 

\[\mu:\Omega^2\rightarrow\mathbb{Z}\]
be a map such that the same properties (i')-(iii') from Section \ref{section-Maslov} are satisfied, as well as

\begin{itemize}
\item[(N)] $\mu(\gamma_{nor},L_1)=1$ and $\mu(L_0,\gamma'_{nor})=-1$, where $L_0=\mathbb{R}^n\times\{0\}$ and $L_1=\{0\}\times\mathbb{R}^n$. 
\end{itemize}
Then $\mu=\mu_{Mas}$ on $\Omega^2$.
\end{prop}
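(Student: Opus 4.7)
The plan is to define $\nu := \mu - \mu_{Mas}$ on $\Omega^2$ and show $\nu \equiv 0$. Since $\mu_{Mas}$ satisfies properties (i'), (ii'), (iii') of Section \ref{section-Maslov}, so does $\nu$, and by hypothesis (N) together with the values $\mu_{Mas}(\gamma_{nor}, L_1) = 1$ and $\mu_{Mas}(L_0, \gamma'_{nor}) = -1$ recalled just before the proposition, $\nu$ vanishes on these two normalising pairs. The argument then proceeds by three successive reductions: (a) reducing an arbitrary admissible pair to a loop based at the transversal pair $(L_0, L_1)$; (b) splitting such a loop into two pairs in which only one of the two components varies; (c) using $\pi_1(\Lambda(n)) \cong \mathbb{Z}$ to reduce each of these to an integer multiple of $(\gamma_{nor}, L_1)$ or $(L_0, \gamma'_{nor})$.

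For (a), given $(\gamma_1, \gamma_2) \in \Omega^2$, Lemma \ref{lemma-connected} supplies paths $\alpha = (\alpha_1, \alpha_2)$ and $\beta = (\beta_1, \beta_2)$ lying entirely in the set of transversal pairs, joining $(L_0, L_1)$ to $(\gamma_1(0), \gamma_2(0))$ and $(\gamma_1(1), \gamma_2(1))$ to $(L_0, L_1)$, respectively. Both $\alpha$ and $\beta$ have vanishing $\nu$ by (i'), so additivity (ii') gives
\[\nu(\gamma_1, \gamma_2) = \nu(\alpha_1 \ast \gamma_1 \ast \beta_1,\, \alpha_2 \ast \gamma_2 \ast \beta_2),\]
and the right-hand side is the value of $\nu$ on a loop based at $(L_0, L_1)$. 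For (b), such a loop $(\gamma_1, \gamma_2)$ is equal, up to a reparametrisation homotopy whose endpoints always remain at the transversal pair $(L_0, L_1)$, to $(\gamma_1 \ast c_{L_0},\, c_{L_1} \ast \gamma_2)$, where $c_L$ denotes the constant path at $L$. Applying (iii') and then (ii') splits $\nu$ as
\[\nu(\gamma_1, \gamma_2) = \nu(\gamma_1, c_{L_1}) + \nu(c_{L_0}, \gamma_2).\]

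For (c), since $\pi_1(\Lambda(n), L_0) \cong \mathbb{Z}$ with the isomorphism realised by the Maslov index of closed paths and $\mu_{Mas}(\gamma_{nor}) = 1$, the class $[\gamma_{nor}]$ generates $\pi_1(\Lambda(n), L_0)$. Hence $\gamma_1$ is homotopic, with fixed endpoints at $L_0$, to a $k$-fold concatenation $\gamma_{nor}^k$ for some $k \in \mathbb{Z}$. Keeping $c_{L_1}$ fixed in the second slot yields a homotopy through admissible pairs (the endpoints stay at $(L_0, L_1)$, which is transversal), so by (iii'), (ii') and (N),
\[\nu(\gamma_1, c_{L_1}) = k \cdot \nu(\gamma_{nor}, c_{L_1}) = 0.\]
The identical argument with $\gamma'_{nor}$, which generates $\pi_1(\Lambda(n), L_1)$, gives $\nu(c_{L_0}, \gamma_2) = 0$, and combining everything yields $\nu \equiv 0$ on $\Omega^2$.

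The main obstacle I anticipate is bookkeeping the admissibility condition along all these homotopies and concatenations. The key simplification making the argument essentially automatic is the choice of the transversal pair $(L_0, L_1)$ as a common basepoint: every loop there is automatically admissible, and every homotopy with basepoint at $(L_0, L_1)$ remains inside $\Omega^2$, so (iii') applies at every step without further verification.
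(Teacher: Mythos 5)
Your proposal is correct and follows essentially the same approach as the paper: reduce to loops based at the transversal pair $(L_0,L_1)$ via Lemma \ref{lemma-connected}, then use that $\pi_1(\Lambda(n)\times\Lambda(n),(L_0,L_1))\cong\mathbb{Z}\oplus\mathbb{Z}$ with basis $\{[(\gamma_{nor},L_1)],[(L_0,\gamma'_{nor})]\}$ and the normalisation (N). The only difference is presentational: the paper phrases steps (b) and (c) abstractly as ``two homomorphisms $\pi_1(\Lambda(n)\times\Lambda(n),(L_0,L_1))\to\mathbb{Z}$ agreeing on a basis must coincide'', whereas you unwind that observation into an explicit decomposition of the loop into a concatenation of powers of the two generators.
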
 
 
\begin{proof}

We note at first that we have by the properties (ii') and (iii') homomorphisms

\begin{align}\label{homomorphisms}
\mu, \mu_{Mas}:\pi_1(\Lambda(n)\times\Lambda(n),(L_0,L_1))\rightarrow\mathbb{Z}
\end{align}
and we now claim that they coincide.\\
We first note that 

\[\pi_1(\Lambda(n)\times\Lambda(n),(L_0,L_1))\cong\pi_1(\Lambda(n),L_0)\times\pi_1(\Lambda(n),L_1)\cong\mathbb{Z}\oplus\mathbb{Z},\]
where the first isomorphism is induced by the projections onto the components and the second one is given by the Maslov index. As $\gamma_{nor}(0)=L_0$, $\gamma'_{nor}(0)=L_1$ and $\mu_{Mas}(\gamma_{nor})=\mu_{Mas}(\gamma'_{nor})=1$, we see that the pairs of paths

\[\{(\gamma_{nor},L_1), (L_0,\gamma'_{nor})\}\]
define a basis of $\pi_1(\Lambda(n)\times\Lambda(n),(L_0,L_1))$. Since the homomorphisms in \eqref{homomorphisms} coincide on this basis by (N), it follows that $\mu$ and $\mu_{Mas}$ are indeed equal for closed paths based at $(L_0,L_1)$.\\ 
Let us now assume that $(\gamma_1,\gamma_2)\in\Omega^2$ is an arbitrary admissible pair of paths. We connect $(L_0,L_1)$ to $(\gamma_1(0),\gamma_2(0))$ by a pair of paths $(\gamma_3,\gamma_4)$ and $(\gamma_1(1),\gamma_2(1))$ to $(L_0,L_1)$ by a pair of paths $(\gamma_5,\gamma_6)$, where we can assume by Lemma \ref{lemma-connected} that $\gamma_3(\lambda)\cap\gamma_4(\lambda)=\gamma_5(\lambda)\cap\gamma_6(\lambda)=\{0\}$ for all $\lambda$. Then by (i'), (ii') and the first step of our proof

\begin{align*}
\mu(\gamma_1,\gamma_2)&=\mu(\gamma_3,\gamma_4)+\mu(\gamma_1,\gamma_2)+\mu(\gamma_5,\gamma_6)\\
&=\mu((\gamma_3,\gamma_4)\ast(\gamma_1,\gamma_2)\ast(\gamma_5,\gamma_6))=\mu_{Mas}((\gamma_3,\gamma_4)\ast(\gamma_1,\gamma_2)\ast(\gamma_5,\gamma_6))\\
&=\mu_{Mas}(\gamma_3,\gamma_4)+\mu_{Mas}(\gamma_1,\gamma_2)+\mu_{Mas}(\gamma_5,\gamma_6)=\mu_{Mas}(\gamma_1,\gamma_2),
\end{align*}  
which proves the proposition.
\end{proof} 
\noindent
\begin{rem}
Let $(\gamma_1,\gamma_2)$ be a pair of paths in $\Lambda(n)$ as in (i'), i.e. $\gamma_1(\lambda)\cap\gamma_2(\lambda)=\{0\}$ for all $\lambda\in I$. Then $(\gamma_1,\gamma_2)$ is homotopic to the constant pair of paths $(\widetilde{\gamma}_1(\lambda),\widetilde{\gamma}_2(\lambda))=(\gamma_1(0),\gamma_2(0))$, $\lambda\in I$, by a homotopy of admissible pairs. Hence $\mu(\gamma_1,\gamma_2)=\mu(\widetilde{\gamma}_1,\widetilde{\gamma}_2)$ by (iii'). As 

\[\mu(\widetilde{\gamma}_1,\widetilde{\gamma}_2)=\mu((\widetilde{\gamma}_1,\widetilde{\gamma}_2)\ast(\widetilde{\gamma}_1,\widetilde{\gamma}_2))=\mu(\widetilde{\gamma}_1,\widetilde{\gamma}_2)+\mu(\widetilde{\gamma}_1,\widetilde{\gamma}_2)\]
by (ii'), we see that $\mu(\gamma_1,\gamma_2)=\mu(\widetilde{\gamma}_1,\widetilde{\gamma}_2)=0$ and so (i') follows from (ii') and (iii'). Consequently, Proposition \ref{prop-Maslov} actually characterises the Maslov index by the three axioms (ii'), (iii') and (N).
\end{rem}
\noindent
We now define 

\[\mu:\Omega^2\rightarrow\mathbb{Z},\quad \mu(\gamma_1,\gamma_2)=\sfl(\mathcal{A}),\]
where $\mathcal{A}$ is the path of differential operators \eqref{ops-def} for the pair $(\gamma_1,\gamma_2)$. We aim to use Proposition \ref{prop-Maslov} to show Theorem \ref{main} and so we need to check the properties (i'), (ii'), (iii') and (N). Let us first note that (i') follows immediately from (ii) in Section \ref{section-spectralflow} and the fact that $\ker(\mathcal{A}_\lambda)=\gamma_1(\lambda)\cap\gamma_2(\lambda)$. Also, (ii') follows from (iii) in Section \ref{section-spectralflow}. Finally, (ii') is an immediate consequence of the homotopy invariance (i) of the spectral flow and Proposition \ref{prop-continuity}.\\
Hence it remains to show that $\mu(\gamma_{nor},L_1)=1$ and $\mu(L_0,\gamma'_{nor})=-1$, which will be a direct consequence of the following lemma.

\begin{lemma}
The spectra of the operators $\mathcal{A}_\lambda$ in \eqref{ops-def} are

\begin{itemize}
\item[(i)] for $(\gamma_1,\gamma_2)=(\gamma_{nor},L_1)$
\[\sigma(\mathcal{A}_\lambda)=\left\{\pi\lambda-\frac{\pi}{2}+\pi k:\,k\in\mathbb{Z}\right\}\cup\left\{\frac{\pi}{2}+k\pi:\,k\in\mathbb{Z} \right\},\]
\item[(ii)] for $(\gamma_1,\gamma_2)=(L_0,\gamma'_{nor})$

\[\sigma(\mathcal{A}_\lambda)=\left\{-\pi\lambda+\frac{\pi}{2}+\pi k:\,k\in\mathbb{Z}\right\}\cup\left\{\frac{\pi}{2}+k\pi:\,k\in\mathbb{Z}\right\}.\]
\end{itemize}

\end{lemma}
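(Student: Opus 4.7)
The plan is to compute the spectra by solving the eigenvalue problem $Ju' = \mu u$ in closed form. Since $\mathcal{A}_\lambda$ is a first-order ODE on a bounded interval whose domain has finite codimension in $H^1(I,\mathbb{R}^{2n})$, it has compact resolvent, and hence its spectrum is purely discrete; it therefore suffices to determine all $\mu \in \mathbb{R}$ for which the equation admits a nontrivial solution respecting the Lagrangian boundary conditions.

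Multiplying $Ju' = \mu u$ from the left by $J$ and using $J^2 = -I_{2n}$ gives $u' = -\mu J u$, whose solution is
\[
u(t) = e^{-\mu J t} u(0) = \bigl(\cos(\mu t)\, I_{2n} - \sin(\mu t)\, J\bigr) u(0),
\]
again by $J^2 = -I_{2n}$. In particular $u(1) = \cos(\mu) u(0) - \sin(\mu) J u(0)$, so the eigenvalue problem reduces to the purely linear-algebraic question of when this evaluation lies in $\gamma_2(\lambda)$ for some $u(0) \in \gamma_1(\lambda)$.

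For part (i), I would parametrise $u(0) \in \gamma_{nor}(\lambda)$ as $u(0) = a\bigl(\cos(\pi\lambda)e_1 + \sin(\pi\lambda)e_{n+1}\bigr) + \sum_{j=2}^{n} a_j e_j$, use $Je_j = e_{n+j}$ and $Je_{n+j} = -e_j$ to expand $u(1)$ in the standard basis, and impose the requirement $u(1) \in L_1 = \{0\}\times\mathbb{R}^n$. This forces the coefficients of $e_1, e_2, \ldots, e_n$ in $u(1)$ to vanish, and after applying the cosine addition formula the system collapses to the two independent conditions
\[
a\cos(\mu - \pi\lambda) = 0, \qquad a_j \cos(\mu) = 0 \quad (j = 2, \ldots, n).
\]
A nonzero $u$ therefore exists iff $\cos(\mu - \pi\lambda) = 0$ or $\cos(\mu) = 0$, which is exactly the asserted spectrum.

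Part (ii) is entirely parallel: write $u(0) = \sum_{j=1}^n b_j e_j \in L_0$ and demand $u(1) \in \gamma'_{nor}(\lambda)$. Membership in $\gamma'_{nor}(\lambda)$ amounts to (a) the $(e_1, e_{n+1})$-component of $u(1)$ being proportional to $(\sin(\pi\lambda), -\cos(\pi\lambda))$, and (b) the coefficients of $e_2, \ldots, e_n$ vanishing; computing the $2\times 2$ determinant in (a) and simplifying via $\cos(\mu)\cos(\pi\lambda) - \sin(\mu)\sin(\pi\lambda) = \cos(\mu + \pi\lambda)$ yields $b_1\cos(\mu + \pi\lambda) = 0$, while (b) gives $b_j\cos(\mu) = 0$ for $j \geq 2$, which matches the claim. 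There is no real obstacle here — the computation is entirely explicit, the only observation of any substance being that $J^2 = -I_{2n}$ puts the fundamental matrix of the ODE into closed trigonometric form.
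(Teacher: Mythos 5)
Your proposal is correct and follows essentially the same route as the paper: solve $Ju'=\mu u$ in closed form via $u(t)=e^{-\mu Jt}u(0)=(\cos(\mu t)I_{2n}-\sin(\mu t)J)u(0)$, then reduce the eigenvalue problem to a finite-dimensional linear-algebra condition on the boundary values. The paper packages that condition as a nontrivial intersection $\exp(-\mu J)\gamma_2(\lambda)\cap\gamma_1(\lambda)\neq\{0\}$ (resp.\ $\exp(-\mu J)L_0\cap\gamma'_{nor}(\lambda)$) and solves it with a complex-exponential identity, whereas you expand $u(1)$ in the standard basis and read off the vanishing conditions $a\cos(\mu-\pi\lambda)=0$, $a_j\cos\mu=0$ (and the analogous ones in (ii)) directly — a purely cosmetic difference leading to the same two families of eigenvalues.
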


\begin{proof}
We consider $Ju'=\mu u$ and note that the solutions of this equation are

\[u(t)=\exp(-\mu tJ)v,\quad t\in I, v\in\mathbb{R}^{2n}.\]
Let us first consider the path in (i). Then $u$ belongs to the domain of  $\mathcal{A}_\lambda$ if and only if 

\begin{align}\label{equ}
u(0)=v\in\gamma_{nor}(\lambda),\quad u(1)=\exp(-\mu J)v\in L_1.
\end{align}
As $\exp(-\mu J)v\in L_1$ if and only if $v\in\exp(\mu J)L_1$, and $\exp(\mu J)=\cos(\mu)I_{2n}+\sin(\mu)J$, we see that \eqref{equ} is equivalent to

\[(\cos(\mu)I_{2n}+\sin(\mu)J)(\{0\}\times\mathbb{R}^n)\cap\left(\mathbb{R}(\cos(\pi\lambda)e_1+\sin(\pi\lambda)e_{n+1})+\sum^{n}_{j=2}{\mathbb{R}e_j}\right)\neq\{0\}.\]
There are two different cases where these spaces intersect non-trivially. Firstly, if $\cos(\mu)=0$, i.e. $\mu=\frac{\pi}{2}+k\pi$ for $k\in\mathbb{Z}$. Secondly, if there is an $\alpha\neq 0$ such that $\sin(\pi\lambda)e_{n+1}=\alpha\cos(\mu)e_{n+1}$ and $\cos(\pi\lambda)e_1=-\alpha\sin(\mu)e_1$, where we use that $Je_{n+1}=-e_1$. Of course, the latter equations are equivalent to  $\sin(\pi\lambda)=\alpha\cos(\mu)$ and $\cos(\pi\lambda)=-\alpha\sin(\mu)$, which can be rewritten as

\begin{align*}
e^{i\pi\lambda}&=\cos(\pi\lambda)+i\sin(\pi\lambda)=\alpha (-\sin(\mu)+i\cos(\mu))=\alpha i e^{i\mu}=\alpha e^{i(\mu+\frac{\pi}{2})}.
\end{align*} 
Hence $|\alpha|=1$, and this equation holds if and only if $\pi\lambda=\mu+\frac{\pi}{2}+k\pi$, or equivalently $\mu=\pi\lambda-\frac{\pi}{2}-k\pi$.\\
In (ii), $u$ belongs to the domain of  $\mathcal{A}_\lambda$ if and only if 

\begin{align*}
u(0)=v\in L_0,\quad u(1)=\exp(-\mu J)v\in\gamma'_{nor},
\end{align*}
which is equivalent to

\[(\cos(\mu)I_{2n}-\sin(\mu)J)(\mathbb{R}^n\times\{0\})\cap\left(\mathbb{R}(\sin(\pi\lambda)e_1-\cos(\pi\lambda)e_{n+1})+\sum^{2n}_{j=n+2}{\mathbb{R}e_j}\right)\neq\{0\}.\]
Again, there are two cases in which this intersection is non-trivial. Firstly, $\mu=k\pi+\frac{\pi}{2}$ where $\cos(\mu)=0$. Secondly, if there is some $\alpha\neq 0$ such that $\alpha\cos(\mu)e_1=\sin(\pi\lambda)e_1$ and $\alpha\sin(\mu)e_{n+1}=\cos(\pi\lambda)e_{n+1}$, which is equivalent to

\[e^{i\pi\lambda}=\cos(\pi\lambda)+i\sin(\pi\lambda)=\alpha(sin(\mu)+i\cos(\mu))=\alpha i e^{-i\mu}=\alpha e^{i(\frac{\pi}{2}-\mu)}.\] 
Hence $|\alpha|=1$, and the latter equation holds if and only if $\pi\lambda=\frac{\pi}{2}-\mu+k\pi$ which finally shows that $\mu=-\pi\lambda+\frac{\pi}{2}+k\pi$.
\end{proof}
\noindent
We see from the previous lemma that in both cases there is only one eigenvalue of $\mathcal{A}_\lambda$ that crosses the axis whilst the parameter $\lambda$ travels from $0$ to $1$. It is now an immediate consequence of the definition of the spectral flow that $\sfl(\mathcal{A})=1$ for $(\gamma_1,\gamma_2)=(\gamma_{nor},L_1)$ and $\sfl(\mathcal{A})=-1$ for $(\gamma_1,\gamma_2)=(L_0,\gamma'_{nor})$. Hence Theorem \ref{main} is shown in the admissible case.


\subsection*{Step 2: The general case}
Let $(\gamma_1,\gamma_2)$ be a pair of paths in $\Lambda(n)$ which is not necessarily admissible, and let $\mathcal{A}$ be the path \eqref{ops-def}. Let $\delta>0$ be as in Lemma \ref{lemma-sflperturbation} such that 

\begin{align*}
\sfl(\mathcal{A})=\sfl(\mathcal{A}^{\delta_0})
\end{align*} 
for all $0\leq \delta_0\leq\delta$.\\
We consider the solution $\Psi:I\rightarrow\Sp(2n,\mathbb{R})$ of the differential equation

\begin{align*}
\begin{cases}
J\Psi'(t)+\delta_0\Psi(t)=0,\quad t\in I\\
\Psi(0)=I_{2n},
\end{cases}
\end{align*}
and the operator $M\in GL(L^2(I,\mathbb{R}^{2n}))$ given by $(Mu)(t)=\Psi(t)u(t)$, $t\in I$. Then, as $\mathcal{D}(\mathcal{A}^{\delta_0}_\lambda)=\mathcal{D}(\mathcal{A}_\lambda)$, $M^T\mathcal{A}^{\delta_0}_\lambda M$ is defined on the domain

\begin{align*}
\mathcal{D}(M^T\mathcal{A}^{\delta_0}_\lambda M)&=M^{-1}(\mathcal{D}(\mathcal{A}^{\delta_0}_\lambda))=\{\Psi(\cdot)^{-1}u\in H^1(I,\mathbb{R}^{2n}):u(0)\in\gamma_1(\lambda),\, u(1)\in\gamma_2(\lambda)\}\\
&=\{v\in H^1(I,\mathbb{R}^{2n}):\, v(0)\in\gamma_1(\lambda), v(1)\in\Psi(1)^{-1}\gamma_2(\lambda)\}
\end{align*}
and given by

\begin{align*}
(M^T\mathcal{A}^{\delta_0}_\lambda Mu)(t)&=M^T(J\Psi'(t)u(t)+J\Psi(t)u'(t)+\delta_0\Psi(t)u(t))\\
&=M^T(-\delta_0\Psi(t)u(t)+J\Psi(t)u'(t)+\delta_0\Psi(t)u(t))=\Psi(t)^TJ\Psi(t)u'(t)=Ju'(t).
\end{align*}
As $\Psi(t)=\exp(\delta_0 Jt)$, $t\in I$, we see that $\Psi(1)^{-1}=\exp(-\delta_0 J)$. Finally, if $\delta_0>0$ is sufficiently small, we obtain by Step 1, Proposition \ref{prop-continuity} and the definition of the Maslov index for non-admissible paths in Section \ref{section-Maslov},

\[\sfl(\mathcal{A})=\sfl(\mathcal{A}^{\delta_0})=\mu_{Mas}(\gamma_1,e^{-\delta_0 J}\gamma_2)=\mu_{Mas}(\gamma_1,\gamma_2),\]
which proves Theorem \ref{main} in the general case.


\section{A Spectral Flow Formula for Hamiltonian Systems}


Let $\gamma_1,\gamma_2:I\rightarrow\Lambda(n)$ be two paths of Lagrangian subspaces in $\mathbb{R}^{2n}$. We note for later reference the following two standard properties of the Maslov index (see e.g. \cite{RobbinMaslov})

\begin{enumerate}
\item[(vi')] If $\Psi:I\rightarrow\Sp(2n,\mathbb{R})$ is a path of symplectic matrices, then

\begin{align}\label{sympinv}
\mu_{Mas}(\Psi\gamma_1,\Psi \gamma_2)=\mu_{Mas}(\gamma_1,\gamma_2).
\end{align}
\item[(vii')] If $\gamma'_1,\gamma'_2:I\rightarrow\Lambda(n)$ denote the reverse paths defined by $\gamma'_1(\lambda)=\gamma_1(1-\lambda)$ and $\gamma'_2(\lambda)=\gamma_2(1-\lambda)$, then

\begin{align}\label{reverse}
\mu_{Mas}(\gamma'_1,\gamma'_2)=-\mu_{Mas}(\gamma_1,\gamma_2).
\end{align}
\end{enumerate}
Moreover, we need below the following homotopy invariance property which is an immediate consequence of (iii') in Section \ref{section-Maslov} and the definition of the Maslov index for non-admissible pairs of paths:

\begin{enumerate}
\item[(viii')] $\mu_{Mas}(\gamma_1,\gamma_2)=\mu_{Mas}(\gamma_3,\gamma_4)$ if $\gamma_1\simeq\gamma_3$ and $\gamma_2\simeq\gamma_4$ are homotopic by homotopies with fixed endpoints.
\end{enumerate}
Let now $S:I\times I\rightarrow M(2n,\mathbb{R})$ be a two parameter family of symmetric matrices and let us consider

\begin{equation}\label{Hamiltonian}
\left\{
\begin{aligned}
Ju'(t)+S_\lambda(t)u(t)&=0,\quad t\in I\\
(u(0),u(1))\in \gamma_1(\lambda)&\times \gamma_2(\lambda),
\end{aligned}
\right.
\end{equation}
as well as the differential operators

\begin{align}\label{A-hamiltonian}
\mathcal{A}_\lambda:\mathcal{D}(\mathcal{A}_\lambda)\subset L^2(I,\mathbb{R}^{2n})\rightarrow L^2(I,\mathbb{R}^{2n}),\quad (\mathcal{A}_\lambda u)(t)=Ju'(t)+S_\lambda(t)u(t) 
\end{align}
on the domains $\mathcal{D}(\mathcal{A}_\lambda)=\{u\in H^1(I,\mathbb{R}^{2n}):\, u(0)\in \gamma_1(\lambda),\, u(1)\in \gamma_2(\lambda)\}$.\\
We denote for $\lambda\in I$ by $\Psi_\lambda:I\rightarrow\Sp(2n,\mathbb{R})$ the matrices defined by

\begin{equation}\label{Psi}
\left\{
\begin{aligned}
J\Psi'_\lambda(t)+S_\lambda(t)\Psi_\lambda(t)&=0,\quad t\in I\\
\Psi_\lambda(0)&=I_{2n},
\end{aligned}
\right.
\end{equation}
and we set $(\Psi\gamma_1)(\lambda)=\Psi_\lambda(1) \gamma_1(\lambda)$. The aim of this final section is to obtain the following spectral flow formula from Theorem \ref{main}.

\begin{theorem}\label{thm-Hamiltonian}
Under the assumptions above, $\mathcal{A}$ is a gap-continuous path of selfadjoint Fredholm operators on $L^2(I,\mathbb{R}^{2n})$ and

\[\sfl(\mathcal{A})=\mu_{Mas}(\Psi\gamma_1,\gamma_2).\]
\end{theorem}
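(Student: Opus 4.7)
The strategy is to conjugate the path $\mathcal{A}$ by the fundamental matrix of the Hamiltonian system so as to reduce to the setting of Theorem \ref{main}. The underlying algebraic trick already appeared in Step 2 of the previous section; here it is used to eliminate the zeroth-order term $S_\lambda$ globally rather than only the perturbation $\delta_0 I$.

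For the gap-continuity I would first observe that if $\mathcal{A}^0$ denotes the path of operators from \eqref{ops-def} with boundary conditions $(\gamma_1,\gamma_2)$, which is gap-continuous by Proposition \ref{prop-continuity}, then $\mathcal{A}=\mathcal{A}^0+S$ is a bounded continuous perturbation by the pointwise multiplication operators $S_\lambda\in\mathcal{L}(L^2(I,\mathbb{R}^{2n}))$, which depend continuously on $\lambda$ in the operator norm. As in the proof of Lemma \ref{lemma-sflperturbation}, \cite[Thm.~IV.2.17]{Kato} then implies that both gap-continuity and the selfadjoint Fredholm property are preserved, so $\mathcal{A}:I\rightarrow\mathcal{CF}^\textup{sa}(L^2(I,\mathbb{R}^{2n}))$ is gap-continuous.

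Next I would set up the conjugation. Define $M_\lambda\in GL(L^2(I,\mathbb{R}^{2n}))$ by $(M_\lambda u)(t)=\Psi_\lambda(t)u(t)$; its adjoint is the pointwise multiplication $(M^T_\lambda v)(t)=\Psi_\lambda(t)^T v(t)$, and continuous dependence on parameters for \eqref{Psi} shows that $\{M_\lambda\}_{\lambda\in I}$ is a norm-continuous family of bounded invertible operators. Using both the ODE \eqref{Psi} satisfied by $\Psi_\lambda$ and the symplectic identity $\Psi_\lambda(t)^TJ\Psi_\lambda(t)=J$, a direct computation (of exactly the same shape as at the end of Step 2 in Section \ref{section-proof}) yields
\[(M^T_\lambda\mathcal{A}_\lambda M_\lambda u)(t)=Ju'(t),\]
so that $M^T\mathcal{A}M$ is a path of the form \eqref{ops-def} with boundary conditions $u(0)\in\gamma_1(\lambda)$ and $u(1)\in\Psi_\lambda(1)^{-1}\gamma_2(\lambda)$. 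Both of these paths in $\Lambda(n)$ are continuous, the second one again by continuous dependence on the parameter of \eqref{Psi}.

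To conclude, Lemma \ref{lemma-conjuagtion}, Theorem \ref{main}, and property (vi') applied with the path of symplectic matrices $\lambda\mapsto\Psi_\lambda(1)$ give
\begin{align*}
\sfl(\mathcal{A})&=\sfl(M^T\mathcal{A}M)=\mu_{Mas}(\gamma_1,\Psi(1)^{-1}\gamma_2)\\
&=\mu_{Mas}(\Psi(1)\gamma_1,\gamma_2)=\mu_{Mas}(\Psi\gamma_1,\gamma_2),
\end{align*}
which is the desired formula. The only step requiring an actual computation is the verification that conjugation by $M$ annihilates the $S$-term; I expect this to be the main (but quite mild) obstacle, as everything else is an immediate application of results already established earlier in the paper.
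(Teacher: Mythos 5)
Your proof is correct and uses the same central idea as the paper: conjugate $\mathcal{A}_\lambda$ by the multiplication operator $M_\lambda$ associated to the fundamental solution $\Psi_\lambda$, observe that this reduces the problem to Theorem \ref{main} with boundary conditions $(\gamma_1(\lambda),\Psi_\lambda(1)^{-1}\gamma_2(\lambda))$, and then pass the inverse symplectic matrix to the other slot via (vi'). The one place where you diverge is the proof of gap-continuity of $\mathcal{A}$: the paper first notes that $M^T\mathcal{A}M$ is a path of operators of the form \eqref{ops-def} and hence gap-continuous by Proposition \ref{prop-continuity}, and then appeals to Lemma \ref{lemma-conjuagtion} to transfer continuity back to $\mathcal{A}$; you instead write $\mathcal{A}_\lambda=\mathcal{A}^0_\lambda+S_\lambda$ with $\mathcal{A}^0$ the unperturbed path from \eqref{ops-def} and invoke the fact that a bounded, norm-continuous perturbation preserves gap-continuity via \cite[Thm.~IV.2.17]{Kato}, exactly as in Lemma \ref{lemma-sflperturbation}. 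Both arguments are valid; your route is a little more direct since it bypasses Lemma \ref{lemma-conjuagtion} for the continuity claim. One small imprecision: Kato's Theorem IV.2.17 delivers the gap-continuity, but the preservation of selfadjointness and of the Fredholm property under the perturbation by $S_\lambda$ is a separate point, best justified by noting that $S_\lambda$ is bounded and symmetric (so selfadjointness is preserved) and $\mathcal{A}_\lambda$-compact owing to the compact embedding $H^1\hookrightarrow L^2$ (so Fredholmness is preserved by standard stability theory), exactly the observation made in the paper after Corollary \ref{cor-Hamiltonian}.
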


\begin{proof}
We define a continuous family of bounded invertible operators on $L^2(I,\mathbb{R}^{2n})$ by $(M_\lambda u)(t)=\Psi_\lambda(t) u(t)$, $t\in I$. Then 

\begin{align*}
(M^T_\lambda\mathcal{A}_\lambda M_\lambda u)(t)&= \Psi^T_\lambda(t)(J\Psi'_\lambda(t)u(t)+J\Psi_\lambda(t)u'(t)+S_\lambda(t)\Psi_\lambda(t)u(t))\\
&=\Psi^T_\lambda(t)(-S_\lambda(t)\Psi_\lambda(t)u(t))+\Psi^T_\lambda(t)J\Psi_\lambda(t)u'(t)+\Psi^T_\lambda(t)S_\lambda(t)\Psi_\lambda(t)u(t)\\
&=Ju'(t)
\end{align*}
and

\begin{align*}
\mathcal{D}(M^T_\lambda\mathcal{A}_\lambda M_\lambda)&= M^{-1}_\lambda(\mathcal{D}(\mathcal{A}_\lambda))=\{u\in H^1(I,\mathbb{R}^{2n}):\, u(0)\in\gamma_1(\lambda), u(1)\in \Psi_\lambda(1)^{-1}\gamma_2(\lambda)\}.
\end{align*}
By Theorem \ref{main}, $M^T\mathcal{A}M$ is gap-continuous, and it follows from Lemma \ref{lemma-conjuagtion} that $\mathcal{A}$ is gap-continuous as well. Moreover, we obtain from Theorem \ref{main} and \eqref{sympinv} that

\[\sfl(\mathcal{A})=\sfl(M^T\mathcal{A}M)=\mu_{Mas}(\gamma_1,\Psi_{(\cdot)}(1)^{-1}\gamma_2)=\mu_{Mas}(\Psi\gamma_1,\gamma_2).\] 
\end{proof}
\noindent
Note that we obtain from $\Psi$ and $\gamma_1$ further pairs of paths in $\Lambda(n)$ by 

\[I\ni t\mapsto \Psi_0(t)\gamma_1(0)\in\Lambda(n),\quad I\ni t\mapsto\Psi_1(t)\gamma_1(1)\in\Lambda(n).\]
The following corollary is an easy reformulation of the previous theorem.

\begin{cor}\label{cor-Hamiltonian}
Under the previous assumptions,

\[\sfl(\mathcal{A})=\mu_{Mas}(\Psi_1(\cdot)\gamma_1(1),\gamma_2(1))+\mu_{Mas}(\gamma_1,\gamma_2)-\mu_{Mas}(\Psi_0(\cdot)\gamma_1(0),\gamma_2(0)).\]
\end{cor}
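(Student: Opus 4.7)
The plan is to apply Theorem \ref{thm-Hamiltonian} and then rewrite the resulting Maslov index using a square-homotopy argument. By Theorem \ref{thm-Hamiltonian},
\[
\sfl(\mathcal{A})=\mu_{Mas}(\Psi\gamma_1,\gamma_2),
\]
where $(\Psi\gamma_1)(\lambda)=\Psi_\lambda(1)\gamma_1(\lambda)$. Thus the task reduces to a purely topological identity about Maslov indices, with no further spectral flow input.

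First I would introduce the two-parameter family
\[
F:I\times I\to\Lambda(n),\qquad F(\lambda,t)=\Psi_\lambda(t)\gamma_1(\lambda),
\]
which is continuous by the continuous dependence of the ODE \eqref{Psi} on $\lambda$ and by the continuity of $\gamma_1$. Its four boundary restrictions are $F(\cdot,0)=\gamma_1$, $F(\cdot,1)=\Psi\gamma_1$, $F(0,\cdot)=\Psi_0(\cdot)\gamma_1(0)$ and $F(1,\cdot)=\Psi_1(\cdot)\gamma_1(1)$. Viewing $F$ as a homotopy of paths in $\Lambda(n)$ realises $\Psi\gamma_1$ as homotopic, with fixed endpoints, to the concatenation
\[
\overline{\Psi_0(\cdot)\gamma_1(0)}\ast\gamma_1\ast\Psi_1(\cdot)\gamma_1(1),
\]
where the bar denotes the reversed path. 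Concretely, going around the square from $(0,1)$ to $(1,1)$ directly along the top is homotopic to going down the left edge, across the bottom, and up the right edge.

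Next I pair this homotopy with a compatible reparametrisation of $\gamma_2$, namely $c_{\gamma_2(0)}\ast\gamma_2\ast c_{\gamma_2(1)}$, where $c_L$ denotes the constant path at $L\in\Lambda(n)$. Since the second component stays equal to $\gamma_2$ throughout, this gives a homotopy of pairs of paths with fixed endpoints. Property (viii') therefore yields
\[
\mu_{Mas}(\Psi\gamma_1,\gamma_2)=\mu_{Mas}\bigl(\overline{\Psi_0(\cdot)\gamma_1(0)}\ast\gamma_1\ast\Psi_1(\cdot)\gamma_1(1),\ c_{\gamma_2(0)}\ast\gamma_2\ast c_{\gamma_2(1)}\bigr).
\]
Applying the concatenation formula (ii') splits the right hand side into three terms, and using the reversal formula (vii') together with the obvious identity $\mu_{Mas}(\eta,c_L)=\mu_{Mas}(\eta,L)$ gives
\[
\mu_{Mas}(\Psi\gamma_1,\gamma_2)=-\mu_{Mas}(\Psi_0(\cdot)\gamma_1(0),\gamma_2(0))+\mu_{Mas}(\gamma_1,\gamma_2)+\mu_{Mas}(\Psi_1(\cdot)\gamma_1(1),\gamma_2(1)),
\]
which combined with Theorem \ref{thm-Hamiltonian} is exactly the claim.

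The main obstacle is bookkeeping across the admissible/non-admissible distinction: properties (ii') and (vii') were stated for admissible pairs in Section \ref{section-Maslov}, whereas here the concatenation points $(\Psi_0(1)\gamma_1(0),\gamma_2(0))$ and $(\Psi_1(1)\gamma_1(1),\gamma_2(1))$ need not be transversal. This is handled by first perturbing $\gamma_2$ by $e^{-\Theta J}$ for sufficiently small $\Theta>0$ to achieve admissibility (as in the definition of the Maslov index for non-admissible pairs in Section \ref{section-Maslov}), applying the identities in the admissible setting, and then passing to the limit $\Theta\to 0$ using that all four Maslov indices appearing are defined consistently under this perturbation by property (viii').
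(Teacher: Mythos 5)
Your argument is correct and is essentially the paper's own proof: both work with the two-parameter family $\Gamma(\lambda,t)=(\Psi_\lambda(t)\gamma_1(\lambda),\gamma_2(\lambda))$ on the contractible square $I\times I$ and read off the relation from (ii'), (vii'), (viii') by comparing the top edge to the other three boundary edges. The paper states this as the vanishing of the Maslov index of the boundary loop, while you state it as a fixed-endpoint homotopy of the top edge to the concatenation of the remaining three edges --- the same thing; your explicit $e^{-\Theta J}$-perturbation to cover the possibly non-admissible concatenation points is an extra care the paper leaves tacit.
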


\begin{proof}
We consider the family $\Gamma:I\times I\rightarrow\Lambda(n)\times\Lambda(n)$ defined by $\Gamma(\lambda,t)=(\Psi_\lambda(t)\gamma_1(\lambda),\gamma_2(\lambda))$. We set 

\[\eta_1(t)=\Gamma(0,t), \eta_2(\lambda)=\Gamma(\lambda,1), \eta_3(t)=\Gamma(1,1-t), \eta_4(\lambda)=\Gamma(1-\lambda,0).\]
As $I\times I$ is contractible, $\eta_1\ast\eta_2\ast\eta_3\ast\eta_4$ is homotopic to a constant path by a homotopy with fixed endpoints. Hence the Maslov index of $\eta_1\ast\eta_2\ast\eta_3\ast\eta_4$ vanishes by (viii').\\  
As $\mu_{Mas}(\eta_4)=-\mu_{Mas}(\gamma_1,\gamma_2)$, $\mu_{Mas}(\eta_3)=-\mu_{Mas}(\Psi_1(\cdot)\gamma_1(1),\gamma_2(1))$ and $\Psi_\lambda(0)=I_{2n}$ for all $\lambda\in I$, it follows that

\[\mu_{Mas}(\Psi \gamma_1,\gamma_2)=-\mu_{Mas}(\Psi_0(\cdot)\gamma_1(0),\gamma_2(0))+\mu_{Mas}(\gamma_1,\gamma_2)+\mu_{Mas}(\Psi_1(\cdot)\gamma_1(1),\gamma_2(1)).\]
The corollary is now an immediate consequence of the previous theorem.
\end{proof}
\noindent
Note that if $S_0(t)=S_1(t)$ for all $t\in I$, then $\Psi_0(t)=\Psi_1(t)$, $t\in I$. If, moreover, $\gamma_1(0)=\gamma_1(1)$ and $\gamma_2(0)=\gamma_2(1)$, then we obtain from the previous corollary that

\[\sfl(\mathcal{A})=\mu_{Mas}(\gamma_1,\gamma_2).\]
Consequently, under these assumptions the paths \eqref{A-hamiltonian} and \eqref{ops-def} have the same spectral flow and so the spectral flow of \eqref{A-hamiltonian} does not depend on the family of matrices $S$. Note that each $S_\lambda$ is $\mathcal{A}_\lambda$-compact, i.e. $S_\lambda:\mathcal{D}(\mathcal{A}_\lambda)\rightarrow L^2(I,\mathbb{R}^{2n})$ is compact with respect to the graph norm of $\mathcal{A}_\lambda$ on $\mathcal{D}(\mathcal{A}_\lambda)$. Let us point out that for closed paths of bounded selfadjoint Fredholm operators, the spectral flow is invariant under perturbations by compact selfadjoint operators (see \cite[Prop. 3.8]{FPR}).\\ 
Let us now consider again the general setting of Corollary \ref{cor-Hamiltonian}, let $\alpha, \beta:[0,1]\rightarrow[0,1]$ be two continuous functions such that 

\begin{align}\label{alphabeta}
\beta(\lambda)=\alpha(\lambda)+\lambda,\quad \lambda\in[0,1].
\end{align}
Our final result generalises Theorem 2 of \cite{Hu}, where the following spectral flow formula was shown for a particular class of functions $\alpha, \beta$ that satisfy \eqref{alphabeta}.

\begin{cor}
Under the assumptions of Corollary \ref{cor-Hamiltonian},

\begin{align*}
\sfl(\mathcal{A})&=\mu_{Mas}(\Psi_0(\alpha(\cdot))\gamma_1(0),\Psi_0(\beta(\cdot))\Psi_0(1)^{-1}\gamma_2(0))+\mu_{Mas}(\gamma_1,\gamma_2)\\
&-\mu_{Mas}(\Psi_1(\alpha(\cdot))\gamma_1(1),\Psi_1(\beta(\cdot))\Psi_1(1)^{-1}\gamma_2(1)).
\end{align*}
\end{cor}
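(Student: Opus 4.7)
The plan is to derive the corollary from Corollary \ref{cor-Hamiltonian} via the following key identity: for every continuous path $\Psi\colon I\to\Sp(2n,\mathbb{R})$ with $\Psi(0)=I_{2n}$ and any $L_1,L_2\in\Lambda(n)$,
\[\mu_{Mas}(\Psi(\alpha(\cdot))L_1,\Psi(\beta(\cdot))\Psi(1)^{-1}L_2)=-\mu_{Mas}(\Psi(\cdot)L_1,L_2).\]
Observe that $\alpha,\beta\colon I\to I$ together with $\beta(\lambda)=\alpha(\lambda)+\lambda$ force $\alpha(1)=0$, $\beta(1)=1$ and $\beta(0)=\alpha(0)$. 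Applying this identity with $(\Psi_0,\gamma_1(0),\gamma_2(0))$ and with $(\Psi_1,\gamma_1(1),\gamma_2(1))$ and substituting into the formula of Corollary \ref{cor-Hamiltonian} yields the claim directly.

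The first step in proving the identity is the symplectic invariance (vi'). Conjugating the pair $(\Psi(\cdot)L_1,L_2)$ by the symplectic path $\Psi(\cdot)^{-1}$ gives
\[\mu_{Mas}(\Psi(\cdot)L_1,L_2)=\mu_{Mas}(L_1,P),\qquad P(\lambda):=\Psi(\lambda)^{-1}L_2,\]
and conjugating $(\Psi(\alpha(\cdot))L_1,\Psi(\beta(\cdot))\Psi(1)^{-1}L_2)$ by $\Psi(\alpha(\cdot))^{-1}$ shows that its Maslov index equals $\mu_{Mas}(L_1,Q)$ with $Q(\lambda):=\Psi(\alpha(\lambda))^{-1}\Psi(\beta(\lambda))\Psi(1)^{-1}L_2$. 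Using the corner values above one checks that the reversed path $\bar P(\lambda):=P(1-\lambda)=\Psi(1-\lambda)^{-1}L_2$ and $Q$ both run from $\Psi(1)^{-1}L_2$ to $L_2$.

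The crux of the argument is to produce a rel-endpoints homotopy in $\Lambda(n)$ from $\bar P$ to $Q$. I take the explicit formula
\[H(\lambda,s):=\Psi(f(\lambda,s))^{-1}\Psi(g(\lambda,s))\Psi(1)^{-1}L_2,\]
with $f(\lambda,s):=(1-s)(1-\lambda)+s\alpha(\lambda)$ and $g(\lambda,s):=(1-s)+s\beta(\lambda)$. A direct check then verifies $H(\cdot,0)=\bar P$, $H(\cdot,1)=Q$, $H(0,s)\equiv\Psi(1)^{-1}L_2$ (because $\beta(0)=\alpha(0)$ forces $f(0,s)=g(0,s)$), and $H(1,s)\equiv L_2$ (because $\alpha(1)=0$ and $\beta(1)=1$). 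The hypothesis $\beta=\alpha+\lambda$ enters essentially here: it guarantees $g(\lambda,s)\in[0,1]$ as a convex combination of $1$ and $\beta(\lambda)\in[0,1]$, and it is what makes the edge values of $f,g$ at $\lambda\in\{0,1\}$ match so that the endpoints of the homotopy are indeed constant.

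To finish, the homotopy invariance (viii'), applied with the trivial homotopy in the first component and $H$ in the second, gives $\mu_{Mas}(L_1,Q)=\mu_{Mas}(L_1,\bar P)$, while the reversal property (vii') gives $\mu_{Mas}(L_1,\bar P)=-\mu_{Mas}(L_1,P)$. Chaining with the two equalities from the first step produces the identity. I expect the only non-routine point to be spotting the correct homotopy $H$; everything else is a mechanical application of the formal properties of the Maslov index recalled at the start of this section.
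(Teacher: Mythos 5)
Your proof is correct and follows the same basic strategy as the paper: construct an explicit convex-combination homotopy between the ``$(\alpha,\beta)$'' path and the reversed ``$(1-\lambda,1)$'' path, and then invoke (vi'), (vii') and (viii'). The difference is one of organisation, and it actually tightens a loose spot in the paper's argument. The paper keeps both components of the pair in play and writes down the two-parameter family $H_i(s,\lambda)=(\Psi_i(h_1(s,\lambda))\gamma_1(i),\Psi_i(h_2(s,\lambda))\Psi_i(1)^{-1}\gamma_2(i))$ with $h_1(s,\lambda)=(1-s)\alpha(\lambda)+s(1-\lambda)$ and $h_2(s,\lambda)=(1-s)\beta(\lambda)+s$; at $\lambda=0$ the edge $H_i(s,0)$ is only fixed \emph{up to} the simultaneous symplectic transformation $\Psi_i\bigl((1-s)\alpha(0)+s\bigr)$, so the stated application of (viii') (which as formulated requires literally fixed endpoints) needs an implicit appeal to (vi') to be legitimate, and the displayed equality $H_i(s,0)=(\gamma_1(i),\Psi_i(1)^{-1}\gamma_2(i))$ is not a set equality unless $\alpha(0)=0$. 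You apply (vi') up front, conjugating by $\Psi(\alpha(\cdot))^{-1}$ (respectively $\Psi(\cdot)^{-1}$) so that the first component is the constant $L_1$, and then your one-component homotopy $H(\lambda,s)=\Psi(f(\lambda,s))^{-1}\Psi(g(\lambda,s))\Psi(1)^{-1}L_2$ has endpoints that really are constant in $s$; this makes the invocation of (viii') immaculate. The convex-combination coefficients are the same (up to swapping $s\leftrightarrow 1-s$), so the underlying idea is identical, but your version is cleaner at exactly the point where the paper is informal.
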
  

\begin{proof}
We define maps $h_1,h_2:I\times I\rightarrow I$ by

\[h_1(s,\lambda)=(1-s)\alpha(\lambda)+s(1-\lambda),\qquad h_2(s,\lambda)=(1-s)\beta(\lambda)+s,\qquad\]
and consider for $i=1,2$ the homotopies

\[H_i:I\times I\rightarrow\Lambda(n)\times\Lambda(n),\quad H_i(s,\lambda)=(\Psi_i(h_1(s,\lambda))\gamma_1(i),\Psi_i(h_2(s,\lambda))\Psi_i(1)^{-1}\gamma_2(i)).\]
As $\alpha(0)=\beta(0)$, we see that $h_1(s,0)=h_2(s,0)$ and so

\[H_i(s,0)=(\Psi_i(h_1(s,0))\gamma_1(i),\Psi_i(h_2(s,0))\Psi_i(1)^{-1}\gamma_2(i))=(\gamma_1(i),\Psi_i(1)^{-1}\gamma_2(i))\]
is independent of $s$, where we have used (vi'). Moreover, since $\alpha(1)=0$, $\beta(1)=1$, and $\Psi_i(0)=I_{2n}$,

\[H_i(s,1)=(\Psi_i(0)\gamma_1(i),\Psi_i(1)\Psi_i(1)^{-1}\gamma_2(i))=(\gamma_1(i),\gamma_2(i)),\]
and so $H_i$ is a homotopy with fixed endpoints. Hence $\mu_{Mas}(H_i(0,\cdot))=\mu_{Mas}(H_i(1,\cdot))$ by (viii') from above.\\
Finally, we note that

\begin{align*}
H_i(0,\lambda)&=(\Psi_i(\alpha(\lambda))\gamma_1(i),\Psi_i(\beta(\lambda))\Psi_i(1)^{-1}\gamma_2(i)),\\
H_i(1,\lambda)&=(\Psi_i(1-\lambda)\gamma_1(i),\Psi_i(1)\Psi_i(1)^{-1}\gamma_2(i))=(\Psi_i(1-\lambda)\gamma_1(i),\gamma_2(i))
\end{align*}
for all $\lambda\in I$, and

\[\mu_{Mas}(\Psi_i(1-\cdot)\gamma_1(i),\gamma_2(i))=-\mu_{Mas}(\Psi_i(\cdot)\gamma_1(i),\gamma_2(i)),\]
where we have used (vii'). Now the assertion of the corollary follows from Corollary \ref{cor-Hamiltonian}. 


\end{proof}
\noindent
Finally, let us briefly point out that a version of the Morse Index Theorem in semi-Riemannian geometry from \cite{PejsachowiczGeod} can easily be derived from Theorem \ref{thm-Hamiltonian} as well. We do not intend to explain the geometric content of the theorem, but just mention that it deals with non-trivial solutions of boundary value problems of the type

\begin{equation}\label{Jacobi}
\left\{
\begin{aligned}
Ju'(t)+S_\lambda(t)u(t)&=0,\quad t\in I\\
u(0),u(1)\in\{0\}&\times\mathbb{R}^n
\end{aligned}
\right.
\end{equation}
where $J$ is as in \eqref{J} and $S_\lambda$ is again a family of symmetric $2n\times 2n$ matrices. If we consider the operators $\mathcal{A}_\lambda$ in \eqref{A-hamiltonian} for the equations \eqref{Jacobi}, then

\begin{align}\label{semiR}
\sfl(\mathcal{A})=\mu_{Mas}(\Psi(\{0\}\times\mathbb{R}^n),\{0\}\times\mathbb{R}^n)
\end{align}
by Theorem \ref{thm-Hamiltonian}, where $\Psi=\{\Psi_\lambda(1)\}_{\lambda\in I}$ is the path in $\Sp(2n,\mathbb{R})$ obtained as in \eqref{Psi}. This is Proposition 6.1 in \cite{PejsachowiczGeod}. Note that in this setting the path $\mathcal{A}=\{\mathcal{A}_\lambda\}_{\lambda\in I}$ has the constant domain $\mathcal{D}(\mathcal{A}_\lambda)=\{u\in H^1(I,\mathbb{R}^{2n}):\, u(0),u(1)\in \{0\}\times\mathbb{R}^n\}$, which allows to compute its spectral flow by crossing forms (see \cite{Robbin} and \cite{WaterstraatHomoclinics}) and yields the different proof of \eqref{semiR} given in \cite{PejsachowiczGeod}.

\thebibliography{99}

\bibitem{Maslov} V.I. Arnold, \textbf{On a characteristic class entering into conditions of quantization},
 Funkcional. Anal. i Prilozen. \textbf{1}, \textbf{1967}, 1--14
 
\bibitem{APS} M.F. Atiyah, V.K.  Patodi, I.M.  Singer, \textbf{Spectral asymmetry and Riemannian geometry III},
 Math. Proc. Cambridge Philos. Soc.  \textbf{79}, 1976, 71--99

\bibitem{UnbSpecFlow} B. Booss-Bavnbek, M. Lesch, J. Phillips, \textbf{Unbounded Fredholm operators and spectral flow}, Canad. J. Math. \textbf{57}, 2005, 225--250

\bibitem{Bott} R. Bott, \textbf{On the iteration of closed geodesics and the Sturm intersection theory}, Comm. Pure Appl. Math. \textbf{9},   1956, 171--206

\bibitem{Cappell} S.E. Cappell, R. Lee, E.Y. Miller, \textbf{On the Maslov index}, Comm. Pure Appl. Math. \textbf{47},  1994, 121--186

\bibitem{Duistermaat} J.J. Duistermaat, \textbf{On the Morse index in variational calculus}, Advances in Math. \textbf{21}, 1976, 173--195

\bibitem{FPR}  P.M. Fitzpatrick, J. Pejsachowicz, L. Recht, \textbf{Spectral Flow and Bifurcation of Critical Points of Strongly-Indefinite Functionals-Part I: General Theory}, J. Funct. Anal. \textbf{162}, 1999, 52--95

\bibitem{Sternberg} V. Guillemin, S. Sternberg, \textbf{Geometric asymptotics}, Mathematical Surveys \textbf{14}, American Mathematical Society, Providence, R.I.,  1977

\bibitem{Hu} X. Hu, A. Portaluri, \textbf{Index theory for heteroclinic orbits of Hamiltonian systems}, Calc. Var. Partial Differential Equations  \textbf{56}, 2017

\bibitem{Kato} T. Kato, \textbf{Perturbation theory for linear operators}, Reprint of the 1980 edition, Classics in Mathematics, Springer-Verlag, Berlin,  1995

\bibitem{Lesch} M. Lesch, \textbf{The uniqueness of the spectral flow on spaces of unbounded self-adjoint Fredholm operators}, Spectral geometry of manifolds with boundary and decomposition of manifolds, 193--224, Contemp. Math., 366, Amer. Math. Soc., Providence, RI,  2005

\bibitem{PejsachowiczGeod} M. Musso, J. Pejsachowicz, A. Portaluri, \textbf{A Morse Index Theorem for Perturbed Geodesics on Semi-Riemannian Manifolds}, Topol. Methods Nonlinear Anal. \textbf{25}, 2005, 69-99

\bibitem{Nicolaescu} L. Nicolaescu, \textbf{On the space of Fredholm operators}, An. Stiint. Univ. Al. I. Cuza Iasi. Mat. (N.S.) \textbf{53},   2007, 209--227

\bibitem{Pejsachowicz} J. Pejsachowicz, N. Waterstraat, \textbf{Bifurcation of critical points for continuous families of $C^2$-functionals of Fredholm type}, J. Fixed Point Theory Appl. \textbf{13}, 2013, 537--560

\bibitem{Phillips} J. Phillips, \textbf{Self-adjoint Fredholm operators and spectral flow}, Canad. Math. Bull. \textbf{39}, 1996, 460--467 

\bibitem{PiccioneBook} P. Piccione, D. V. Tausk, \textbf{A student's guide to symplectic spaces, Grassmannians and Maslov index},
IMPA Mathematical Publications, Rio de Janeiro,  2008

\bibitem{RobbinMaslov} J. Robbin, D. Salamon, \textbf{The Maslov index for paths}, Topology \textbf{32}, 1993, 827--844

\bibitem{Robbin} J. Robbin, D. Salamon, \textbf{The spectral flow and the Maslov index}, Bull. London Math. Soc.  \textbf{27}, 1995, 1--33

\bibitem{Zehnder} D. Salamon, E. Zehnder, \textbf{Morse theory for periodic solutions of Hamiltonian systems and the Maslov index}, Comm. Pure Appl. Math. \textbf{45}, 1992, 1303--1360

\bibitem{Wahl} C. Wahl, \textbf{A new topology on the space of unbounded selfadjoint operators, $K$-theory and spectral flow}, $C^*$-algebras and elliptic theory II, 297--309, Trends. Math., Birkh., Basel, 2008

\bibitem{Fredholm} N. Waterstraat, \textbf{Fredholm Operators and Spectral Flow}, Rend. Semin. Mat. Univ. Politec. Torino \textbf{75}, 2017, 7--51

\bibitem{WaterstraatHomoclinics} N. Waterstraat, \textbf{Spectral flow, crossing forms and homoclinics of Hamiltonian systems}, Proc. Lond. Math. Soc. (3) \textbf{111}, 2015, 275--304

\vspace*{1.3cm}

\begin{minipage}{1.0\textwidth}
\begin{minipage}{0.4\textwidth}
Marek Izydorek\\
Gdansk University of Technology\\
Narutowicza 11/12\\
80-233 Gdansk\\
Poland\\
E-mail:  marek.izydorek@pg.edu.pl
\end{minipage}
\hfill
\begin{minipage}{0.4\textwidth}
Joanna Janczewska\\
Gdansk University of Technology\\
Narutowicza 11/12\\
80-233 Gdansk\\
Poland\\
E-mail: joanna.janczewska@pg.edu.pl 
\end{minipage}
\end{minipage}

\vspace{1cm}
Nils Waterstraat\\
School of Mathematics,\\
Statistics \& Actuarial Science\\
University of Kent\\
Canterbury\\
Kent CT2 7NF\\
UNITED KINGDOM\\
E-mail: n.waterstraat@kent.ac.uk

\end{document}